\numberwithin{equation}{section}
\newtheorem{dfn}{Definition}[section]
\newtheorem{thm}[dfn]{Theorem}
\newtheorem{lma}[dfn]{Lemma}
\newtheorem{crlre}[dfn]{Corollary}
\newtheorem{xmpl}[dfn]{Example}
\newtheorem{rmrk}[dfn]{Remark}
\newcommand{\N}{\mathbb{N}}
\newcommand{\D}{\mathbb{D}}
\newcommand{\C}{\mathbb{C}}		
\newcommand{\fcl}{\mathcal{F}}
\newcommand{\dcl}{\mathcal{D}}
\newcommand{\ncl}{\mathcal{N}}
\newcommand{\mcl}{\mathcal{M}}
\newcommand{\wcl}{\mathcal{W}}
\newcommand{\hilh}{\mathcal{H}}
\newcommand{\kcl}{\mathcal{K}}
\newcommand{\hdcr}{{H}^2_{\mathbb{C}^r}(\mathbb{D})}
\newcommand{\hdcp}{{H}^2_{\mathbb{C}^p}(\mathbb{D})}
\newcommand{\hdcrp}{H^2_{\mathbb{C}^{r+p}}(\mathbb{D})}
\newcommand{\hdcm}{{H}^2_{\mathbb{C}^m}(\mathbb{D})}
\newcommand{\hdcc}{{H}^2_{\mathbb{C}}(\mathbb{D})}
\DeclarePairedDelimiterX{\norm}[1]{\lVert}{\rVert}{#1}
\begin{document}

\title[STUDY OF NEARLY INVARIANT SUBSPACES WITH FINITE DEFECT IN HILBERT SPACES]{STUDY OF NEARLY INVARIANT SUBSPACES WITH FINITE DEFECT IN HILBERT SPACES }


\author[Chattopadhyay] {Arup Chattopadhyay}
\address{Department of Mathematics, Indian Institute of Technology Guwahati, Guwahati, 781039, India}
\email{arupchatt@iitg.ac.in, 2003arupchattopadhyay@gmail.com}

\author[Das]{Soma Das}
\address{Department of Mathematics, Indian Institute of Technology Guwahati, Guwahati, 781039, India}
\email{soma18@iitg.ac.in, dsoma994@gmail.com}


\subjclass[2010]{47A13, 47A15, 47A80, 46E20, 47B38, 47B32,  30H10}

\keywords{Vector valued Hardy space, Nearly invariant subspaces with finite defect, Multiplication operator, Beurling's theorem, Dirichlet space, Blaschke products}

\begin{abstract}
In this article, we briefly describe nearly $T^{-1}$ invariant subspaces with finite defect for a shift operator $T$ having finite multiplicity acting on a separable Hilbert space $\mathcal{H}$ as a generalization of nearly $T^{-1}$ invariant subspaces introduced by Liang and Partington in \cite{YP}. In other words  we  characterize nearly $T^{-1}$ invariant subspaces with finite defect in terms of backward shift invariant subspaces in vector-valued Hardy spaces by using Theorem 3.5 in \cite{CDP}.
Furthermore,  we also provide a concrete representation of the nearly $T_B^{-1}$ invariant subspaces with finite defect in a scale of Dirichlet-type spaces $\mathcal{D}_\alpha$ for $\alpha \in [-1,1]$ corresponding to any finite Blashcke product $B$ .

\end{abstract}
\maketitle

\section{Introduction}
The structure of the invariant subspaces of an operator $T$ plays an important role to study the action of $T$ on the full space in a better way.
 To that aim,  the study of (almost) invariant subspaces were initiated and a suitable investigation of these brings  the concept such as near invariance. The study of nearly invariant subspaces for the backward shift in the scalar valued Hardy space $\hdcc$ were introduced by Hayashi \cite{HE}, Hitt\cite{HIT}, and then Sarason \cite{SR} in the context of kernels of Toeplitz operators. Going further, Chalendar-Chevrot-Partington (C-C-P) \cite{CCP} gives a complete characterization of nearly invariant subspaces under the backward shift operator acting on the vector-valued Hardy space, providing
a vectorial generalization of a result of Hitt. In 2004,  Erard investigated the  nearly invariant subspaces related to  multiplication operators in Hilbert spaces of analytic functions in \cite{ER}. The concept of nearly invariant subspaces of finite defect for the backward shift in the scalar valued Hardy space was introduced by Chalendar- Gallardo-Partington (C-G-P) in \cite{CGP} and provides a complete characterization of these spaces in terms of backward shift invariant subspaces. A recent
preprint \cite{CDP} by the authors of this article along with C. Pradhan characterizes nearly invariant subspace of finite defect for the backward shift operator acting on the vector-valued Hardy space and provides a vectorial generalization of C-G-P
algorithm. In this connection we
also mention that similar type of connection also obtained independently by R. O\textquoteright Loughlin in \cite{OR}. Recently, Liang and Partington introduce the notion of nearly $T^{-1}$ invariant subspaces in general Hilbert space setting \cite{YP} and provide a  representation of  nearly $T^{-1}$ invariant subspaces for the shift operator $T$ with finite multiplicity acting on a separable infinite dimensional Hilbert space $\mathcal{H}$ in terms of backward shift invariant subspaces on the vector valued Hardy spaces as an application of Corollary 4.5. given in \cite{CCP}. Moreover, they 
also give a description of the nearly $T_B^{-1}$ invariant subspaces for the operator $T_B$ of multiplication by $B$ in a scale of Dirichlet-type spaces \cite{YP}, where $B$ is any finite Blashcke product.

Motivated by the work of Liang and Partington in \cite{YP}, we also introduce the notion of nearly $T^{-1}$ invariant subspaces with finite defect (see Definition 2.1) for an left invertible operator $T$ acting on a separable infinite dimensional Hilbert space as a generalization of nearly $T^{-1}$ invariant subspaces. The purpose of this article is to study nearly $T^{-1}$ invariant subspaces with finite defect for a shift operator $T$ with finite multiplicity acting on a separable Hilbert space. In other words we provide a characterization of nearly $T^{-1}$ invariant subspaces with finite defect in terms of backward shift invariant subspaces in vector-valued Hardy spaces by using our recent Theorem 3.5 (C-D-P) in \cite{CDP}. Moreover, we also give a concrete representation of the nearly $T_B^{-1}$ invariant subspaces with finite defect in a scale of Dirichlet-type spaces $\mathcal{D}_\alpha$ for $\alpha \in [-1,1]$ corresponding to any finite Blashcke product $B$ by extending some results of C. Erard in \cite{ER}. There are also many other contributions related with this topic and the interested reader can also refer to \cite{BS}\cite{FM} and the references therein. In order to state the precise contribution of this paper, we need to recapitulate some useful notations and definitions. 

Let $\hilh$ be a separable infinite dimensional Hilbert space and $\mathcal{B}(\hilh)$ denote the set of all bounded linear operators acting on $\hilh$. The $\mathbb{C}^m$- valued Hardy space \cite{JP} over the unit disc $\mathbb{D}$ is denoted by $\hdcm$ and defined by $$\hdcm:=\Big\{F(z)=\sum_{n\geq 0} A_nz^n:~\|F\|^2= \sum_{n\geq 0}~\norm{A_n}_{\mathbb{C}^m}^2<\infty,~A_n\in\mathbb{C}^m\Big\}.$$ We can also view the above Hilbert space as the direct sum of $m$-copies of $H^2_{\mathbb{C}}(\D)$ or  sometimes it is useful to see the above space as a tensor product of two Hilbert spaces $H^2_{\mathbb{C}}(\D)$ and $\mathbb{C}^m$, that is, $$H^2_{\mathbb{C}^m}(\D)  \equiv   \underbrace{H^2_{\mathbb{C}}(\D)\oplus \cdots\oplus H^2_{\mathbb{C}}(\D)}_{m}\equiv H^2_{\mathbb{C}}(\D)\otimes \mathbb{C}^m.$$
On the other hand the space $\hdcm$ can also be defined as the collection of all $\mathbb{C}^m$-valued analytic functions $F$ on $\mathbb{D}$ such that 
$$\norm{F}= \Big[~\sup_{0\leq r<1} \frac{1}{2\pi} \int_0^{2\pi} \vert F(re^{i\theta})\vert^2~ d\theta~\Big]^{\frac{1}{2}}<\infty.$$ 
Moreover the nontangential boundary limit (or radial limit) $$F(e^{i\theta}):= \lim\limits_{r\rightarrow 1-}F(re^{i\theta})$$ exists almost everywhere on the unit circle $\mathbb{T}$ (for more details see \cite{NB}, I.3.11). Therefore $\hdcm$ can be embedded isomertically as a closed subspace of
$L^2(\mathbb{T},\mathbb{C}^m)$ by identifying $\hdcm$ through the nontangential boundary limits of the $\hdcm$ functions.
Let $S$ denote the forward shift operator  (multiplication by the independent variable)
acting on $\hdcm$, that is, $SF(z)=zF(z)$,~$z\in \D$. The adjoint of $S$ is denoted by $S^*$ and defined in $\hdcm$ as the operator 
$$S^*(F)(z)=\dfrac{F(z)-F(0)}{z},~~ F\in\hdcm$$ which is known as backward shift operator.
The Banach space of all $\mathcal{L}(\mathbb{C}^r,\mathbb{C}^m)$ (set of all bounded linear operators from $\mathbb{C}^r$ to $\mathbb{C}^m$)- valued  bounded analytic functions on $\mathbb{D}$ is denoted by 
$H^{\infty}_{\mathcal{L}(\mathbb{C}^r,\mathbb{C}^m)}(\D)$ and the associated norm is $$\norm F_{\infty} = \sup_{z\in \D} ~\norm {F(z)}.$$ Moreover, the space 
$H^{\infty}_{\mathcal{L}(\mathbb{C}^r,\mathbb{C}^m))}(\D)$ can be embedded isometrically as a closed subspace of $L^\infty(\mathbb{T},\mathcal{L}(\mathbb{C}^r,\mathbb{C}^m))$.
Note that each $\Theta\in H^{\infty}_{\mathcal{L}(\mathbb{C}^r,\mathbb{C}^m)}(\D)$ induces a bounded linear map $T_{\Theta}\in H^{\infty}_{\mathcal{L}(\mathbb{C}^r,\mathbb{C}^m)}(\D)$ defined by
$$T_{\Theta}F(z)=\Theta(z)F(z).~~(F\in H^{2}_{\mathbb{C}^r}(\D))$$
The elements of $H^{\infty}_{\mathcal{L}(\mathbb{C}^r,\mathbb{C}^m)}(\D)$  are called the \emph{multipliers} and are determined by
$$\Theta\in H^{\infty}_{\mathcal{L}(\mathbb{C}^r,\mathbb{C}^m)}(\D)\textit{~if~and~only~if~} ST_{\Theta}=T_{\Theta}S,$$
where the shift $S$ on the left hand side and the right hand side act on $ H ^{2}_{\mathbb{C}^m}(\D)$ and $ H ^{2}_{\mathbb{C}^r}(\D)$ respectively. A multiplier $\Theta \in  H ^{\infty}_{\mathcal{L}(\mathbb{C}^r,\mathbb{C}^m)}(\D)$ is said to be \emph{inner} if $T_{\Theta}$ is an isometry, or equivalently, 
$\Theta(e^{it})\in \mathcal{L}(\mathbb{C}^r,\mathbb{C}^m)$ is an isometry almost everywhere with respect to the Lebesgue measure on $\mathbb{T}$. Inner multipliers are among the most important tools for classifying invariant subspaces of reproducing kernel Hilbert spaces. For instance: 

\begin{thm}\label{a1}
	(Beurling-Lax-Halmos \cite{NF}) 
	A non-zero closed subspace $\mathcal{M}\subseteq  H ^{2}_{\mathbb{C}^m}(\D)$ is shift
	invariant if and only if there exists an inner multiplier $\Theta \in H ^{\infty}_{\mathcal{L}(\mathbb{C}^r,\mathbb{C}^m)}(\D)$ such that
	$$\mathcal{M} = \Theta  H ^{2}_{\mathbb{C}^r}(\D),$$
	for some $r$ ($1\leq r\leq m$).  
\end{thm}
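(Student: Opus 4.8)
The plan is to prove the two implications separately, with the substance lying in the necessity (``only if'') direction. For sufficiency, suppose $\mcl=\Theta\,\hdcr$ with $\Theta$ inner. Then $T_\Theta$ is an isometry, so its range $\mcl$ is closed, and since $ST_\Theta=T_\Theta S$ we get $S\mcl=ST_\Theta\hdcr=T_\Theta S\hdcr\subseteq T_\Theta\hdcr=\mcl$, i.e. $\mcl$ is shift invariant. This direction is immediate.

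For necessity, first I would exploit the fact that $S$ is a pure isometry on $\hdcm$: since $\bigcap_{n\ge 0}S^n\hdcm=\{0\}$, the restriction $S|_{\mcl}$ is again a pure isometry. Applying the Wold decomposition to $S|_{\mcl}$ and setting the wandering subspace $\wcl:=\mcl\ominus S\mcl$, I obtain the orthogonal decomposition $\mcl=\bigoplus_{n\ge 0}S^n\wcl$, because the unitary part $\bigcap_{n\ge 0}S^n\mcl\subseteq\bigcap_{n\ge 0}S^n\hdcm$ is trivial. Fixing an orthonormal basis $\{\varphi_1,\dots,\varphi_r\}$ of $\wcl$ with $r=\dim\wcl$, the family $\{S^n\varphi_j=z^n\varphi_j:\ n\ge 0,\ 1\le j\le r\}$ is then an orthonormal basis of $\mcl$.

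The heart of the argument is a Fourier-coefficient computation that simultaneously yields the bound $r\le m$ and the inner property. Writing the boundary values $\varphi_j(e^{it})\in\mathbb{C}^m$, the wandering relations $\langle z^n\varphi_i,z^l\varphi_j\rangle=\delta_{nl}\delta_{ij}$ translate into
\[
\int_{\mathbb{T}}\big\langle \varphi_i(e^{it}),\varphi_j(e^{it})\big\rangle_{\mathbb{C}^m}\,e^{-i(n-l)t}\,\frac{dt}{2\pi}=\delta_{n-l,0}\,\delta_{ij},
\]
so the scalar function $t\mapsto\langle\varphi_i(e^{it}),\varphi_j(e^{it})\rangle$ has all Fourier coefficients vanishing except the zeroth, which equals $\delta_{ij}$. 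Hence $\langle\varphi_i(e^{it}),\varphi_j(e^{it})\rangle_{\mathbb{C}^m}=\delta_{ij}$ for almost every $t$, i.e. the vectors $\varphi_1(e^{it}),\dots,\varphi_r(e^{it})$ are orthonormal in $\mathbb{C}^m$ almost everywhere. This forces $r\le m$ and lets me define $\Theta(z)\in\mathcal{L}(\mathbb{C}^r,\mathbb{C}^m)$ by $\Theta(z)e_j=\varphi_j(z)$ on the standard basis $\{e_j\}$ of $\mathbb{C}^r$; its entries are analytic (being built from the Taylor coefficients of the $\varphi_j\in\hdcm$) and its boundary values are isometric a.e., which via the scalar maximum principle applied to $\langle\Theta(\cdot)\xi,\eta\rangle$ places $\Theta$ in $H^\infty_{\mathcal{L}(\mathbb{C}^r,\mathbb{C}^m)}(\mathbb{D})$ as an inner multiplier.

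Finally I would identify the range. Since $\Theta$ is a multiplier, $T_\Theta(z^n e_j)=z^n\Theta e_j=z^n\varphi_j$, so $T_\Theta$ carries the orthonormal basis $\{z^n e_j\}$ of $\hdcr$ bijectively onto the orthonormal basis $\{z^n\varphi_j\}$ of $\mcl$; being an isometry with this range, $T_\Theta\hdcr=\mcl$, that is $\mcl=\Theta\,\hdcr$. I expect the main obstacle to be the third step: establishing the a.e. orthonormality of the boundary vectors and upgrading the pointwise-defined $\Theta$ to a genuine bounded inner multiplier, since both the dimension bound $r\le m$ and the inner property hinge on this boundary analysis rather than on any purely Hilbert-space manipulation.
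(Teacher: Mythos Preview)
The paper does not prove this statement: it is quoted as the classical Beurling--Lax--Halmos theorem with a citation to Sz.-Nagy and Foia\c{s}, and is invoked only as a black box in the proofs of Theorem~\ref{mainth1} and the closing remark. Your proposal is the standard argument one finds in that reference and elsewhere (Wold decomposition of $S|_{\mcl}$, Fourier analysis of the wandering subspace $\wcl=\mcl\ominus S\mcl$ to obtain a.e.\ orthonormality of the boundary vectors, assembly of $\Theta$ from an orthonormal basis of $\wcl$), and it is correct.

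Two small presentational points worth tightening, both in the step you yourself flag as the main obstacle. First, you write the basis of $\wcl$ as $\{\varphi_1,\dots,\varphi_r\}$ before knowing $\dim\wcl<\infty$; the clean order is to take an arbitrary finite orthonormal set in $\wcl$, run your Fourier-coefficient computation to see that its boundary values are orthonormal in $\mathbb{C}^m$ a.e., conclude the set has at most $m$ elements, and only then fix $r:=\dim\wcl\le m$. Second, the passage from ``$\Theta(e^{it})$ is isometric a.e.'' to ``$\Theta\in H^\infty_{\mathcal{L}(\mathbb{C}^r,\mathbb{C}^m)}(\mathbb{D})$'' is better justified via the Poisson integral representation of $H^2$ functions (each scalar entry $\langle\Theta(\cdot)\xi,\eta\rangle$ lies in $H^2$ with boundary modulus at most $\|\xi\|\,\|\eta\|$, hence is bounded by $\|\xi\|\,\|\eta\|$ throughout $\mathbb{D}$) than by invoking a ``maximum principle,'' which in its classical form would require continuity up to $\mathbb{T}$ that you have not established.
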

Consequently, the space $\mathcal{M}^{\perp}$ of 
$H^2_{\mathbb{C}^m}(\D)$ which is invariant under $S^*$ (backward shift) can be represented as 
$$\mathcal{K}_{\Theta}: = \mathcal{M}^{\perp} =  H^2_{\mathbb{C}^m}(\D) \ominus \Theta  H^2_{\mathbb{C}^r}(\D),$$
which also known as model spaces (\cite{EMV1,EMV2,NB,RSN}).
Let $P_m:L^2(\mathbb{T},\mathbb{C}^m) \to H^2_{C^m}(\D)$ be an orthogonal projection onto $H^2_{C^m}(\D)$ defined by $$ \sum_{n=-\infty}^{\infty}A_ne^{int}\mapsto\sum_{n=0}^{\infty}A_ne^{int}.$$ Therefore $P_m(F)=(Pf_1,Pf_2,\ldots ,Pf_m),$ where $P$ is the Riesz projection on $\hdcc$ \cite{EMV1} and $F=(f_1,f_2,\ldots,f_m)\in L^2(\mathbb{T},\mathbb{C}^m)$.
Also note that for any $\Phi \in L^\infty(\mathbb{T},\mathcal{L}(\mathbb{C}^m,\mathbb{C}^m))$, the Toeplitz operator $T_{\Phi}:\hdcm \to \hdcm$ is defined by $$T_{\Phi}(F)=P_m(\Phi F)$$ for any $F\in \hdcm$.
Next we introduce a special family of Hilbert spaces of analytic functions. Let $\alpha$ be any real number. Then the Dirichlet-type spaces are denoted by $\dcl _\alpha\equiv \dcl _\alpha(\D)$ and defined by
$$\dcl _\alpha \equiv \dcl _\alpha(\D) :=\Big\{f:\D \to \C :f(z)=\sum_{n=0}^{\infty}a_nz^n,\sum_{n=0}^{\infty}(n+1)^\alpha |a_n|^2<\infty \Big\}.$$
Then each $\dcl_\alpha$ is a Hilbert space with respect to the  norm 
$$\norm f_\alpha :=\bigg(\sum_{n=0}^{\infty}(n+1)^\alpha |a_n|^2 \bigg)^{\frac{1}{2}}.$$
Note that the particular instances of $\alpha$ yield well-known Hilbert spaces of analytic functions on $\D$. More precisely, when $\alpha =0$ we get the Hardy space $\hdcc$, for $\alpha =-1$ we have the classical Bergman space $\mathcal{A}^2$,  and $\alpha =1$ correspond to the Dirichlet space $\dcl$. Since $\norm f _\gamma <\norm f_\beta$ for $\gamma < \beta$, then the continuous inclusion  $\dcl _\beta \subset \dcl _\gamma$ holds for any $\gamma <\beta $. For more information about Dirichlet-type spaces we refer to \cite{BS} and the references therein. Recall that an analytic function $u$ is said to be an multiplier of $\dcl _\alpha$ if for any $f\in \dcl _\alpha$, $uf \in \dcl_\alpha$ that is, the analytic Toeplitz operator $T_u: f\to uf $ is defined everywhere on $\dcl_\alpha$ (hence bounded by closed graph theorem). Furthermore, one can easily check that any finite Blaschke product $B$ is a multiplier for each $\dcl_\alpha$ spaces. Note that a finite Blaschke product is given by
\[
 B(z)= e^{i\theta}\prod_{k=1}^N
 \dfrac{z-z_k}{1-\overline{z_k}z}, \quad (z\in \D)
\]
where $\alpha_i\in\D$ and the degree of $B$ is just the number of zeros $\{z_1,\ldots,z_N\}$, counted with multiplicity. Moreover, finite Blashcke products play an important role in mathematics. We refer \cite{TG} and \cite{GM} for more on the subject of multipliers of $\mathcal{D}_{\alpha}$ and the qualitative study of finite Blaschke product respectively.  
The famous Wold Decomposition Theorem \cite{CJ} implies that for any Blaschke product $B$, each element $f\in \hdcc$ has the following decomposition:
$$f(z)=\sum_{n=0}^{\infty}B^n(z)h_n(z),$$
where $h_n$ belongs to the model space $\mathcal{K}_B=\hdcc \ominus B\hdcc$. An analogous theorem for Dirichlet-type spaces $\dcl_\alpha(\D)$ is the following:
\begin{thm}\cite[Theorem 3.1]{CGP2}\cite[Theorem 2.1]{CGP1}\label{th1}
	
	Suppose $\alpha \in [-1,1]$ and $B $ is a finite Blaschke product. Then $f\in \dcl _\alpha(\D)$ if and only if $f=\sum_{n=0}^{\infty}B^nh_n$ (convergence in $\dcl_\alpha (\D)$ norm) with $h_n\in \mathcal{K}_B= \hdcc \ominus B\hdcc$ and 
	\begin{equation}
	\sum_{n=0}^{\infty }(n+1)^\alpha\norm {h_n}^2 _{H^2}<\infty.
	\end{equation}
	Moreover, since $B$ is a finite Blaschke product, then $\mathcal{K}_B$ is finite dimensional and hence we can consider other (equivalent) norms here, such as $\norm h _{\dcl_\alpha}$.
\end{thm}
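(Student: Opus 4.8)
The statement is a Wold-type decomposition of $\dcl_\alpha(\D)$ adapted to the finite Blaschke product $B$, and my plan is to build it on the classical Wold decomposition in the Hardy space. Since $B$ is inner, multiplication $T_B$ is an isometry on $\hdcc$ whose wandering subspace is exactly $\mathcal{K}_B = \hdcc \ominus B\hdcc$; hence $\hdcc = \bigoplus_{n\ge 0} B^n \mathcal{K}_B$ orthogonally, and every $f \in \hdcc$ has a unique expansion $f = \sum_{n\ge 0} B^n h_n$ with $h_n \in \mathcal{K}_B = \ker T_B^*$ and $\norm{f}_{H^2}^2 = \sum_{n\ge 0}\norm{h_n}_{H^2}^2$, the coefficients being recovered by $h_n = P_{\mathcal{K}_B}\bigl(T_B^{*n} f\bigr)$. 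This already settles the case $\alpha = 0$. For general $\alpha \in [-1,1]$ the theorem reduces to the two-sided norm comparison
\[
\Big\| \sum_{n\ge 0} B^n h_n \Big\|_\alpha^2 \; \asymp \; \sum_{n\ge 0} (n+1)^\alpha \norm{h_n}_{H^2}^2 ,
\]
with constants depending only on $B$ and $\alpha$. Granting this, the ``if'' direction follows because the estimate makes the partial sums Cauchy in $\dcl_\alpha$ (yielding convergence in $\dcl_\alpha$-norm), while the ``only if'' direction follows by applying the comparison to the Wold coefficients of $f$. The finite-dimensionality of $\mathcal{K}_B$ asserted in the ``Moreover'' clause is then immediate: on a finite-dimensional space $\norm{\cdot}_{H^2}$ and $\norm{\cdot}_{\dcl_\alpha}$ are equivalent, so the summability condition $\sum_{n}(n+1)^\alpha\norm{h_n}_{H^2}^2<\infty$ is unchanged if $\norm{h_n}_{H^2}$ is replaced by $\norm{h_n}_{\dcl_\alpha}$.

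The core is therefore the single-term estimate
\[
\norm{B^n h}_\alpha^2 \; \asymp \; (n+1)^\alpha \norm{h}_{H^2}^2 \qquad (h \in \mathcal{K}_B,\ n \ge 0),
\]
uniform in $n$ and $h$, together with control of the cross terms $\langle B^n h_n, B^m h_m\rangle_\alpha$ for $n \ne m$. For the single-term estimate I would use $\norm{B^n h}_{H^2} = \norm{h}_{H^2}$ (isometry) to handle the unweighted part, and exploit that each element of $\mathcal{K}_B$ is a rational function analytic across $\overline{\D}$ (poles at the points $1/\overline{z_k}$), hence has geometrically decaying Taylor coefficients; consequently $B^n h$ concentrates its mass in Taylor degrees comparable to $nN$, where $N = \deg B$, and on such a band of fixed width the Dirichlet weight satisfies $(k+1)^\alpha \asymp (n+1)^\alpha$ because $N$ is fixed and $\alpha$ ranges over the bounded interval $[-1,1]$. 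A clean model is $B = z^N$: there $\mathcal{K}_{z^N} = \operatorname{span}\{1,\dots,z^{N-1}\}$, the blocks $z^{nN}\mathcal{K}_{z^N}$ are supported on the disjoint degree-bands $[nN, nN+N-1]$, so the decomposition is genuinely orthogonal in every $\dcl_\alpha$ and the comparison is exactly diagonal.

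The main obstacle is the passage to a general finite Blaschke product. Unlike the monomial case, $\mathcal{K}_B$ is not spanned by monomials and the subspaces $B^n\mathcal{K}_B$ are \emph{not} $\dcl_\alpha$-orthogonal, so the weighted norm of $\sum_n B^n h_n$ cannot simply be read off the diagonal; one must show that the off-diagonal contributions are summable and do not destroy the comparison. I would quantify the overlap $|\langle B^n h, B^m g\rangle_\alpha|$, establish its decay in $|n-m|$ (using the geometric decay of the coefficients of $\mathcal{K}_B$ and the equivalence of all norms on the finite-dimensional $\mathcal{K}_B$), and then run a Schur-test to absorb the cross terms into the diagonal sum; an alternative route for the boundedness half is complex interpolation across the nodes $\alpha=-1,0,1$, applied to the fixed reconstruction map $(h_n)\mapsto\sum_n B^n h_n$ and its inverse. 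This is precisely where the restriction $\alpha \in [-1,1]$ enters, keeping the weight ratios across a fixed-width band bounded. Two further technical points I would address: the endpoints $\alpha = 1$ (Dirichlet) and $\alpha = -1$ (Bergman), where the convenient integral forms of the norm degenerate, and the fact that for $\alpha < 0$ one has $\dcl_\alpha \supsetneq \hdcc$, so the $\hdcc$-Wold coefficients of $f \in \dcl_\alpha$ are not a priori defined; both are handled by first proving the comparison for finite sums $\sum_{n\le M} B^n h_n$ (equivalently, for polynomials, which are dense in each $\dcl_\alpha$) and then extending by density and completeness.
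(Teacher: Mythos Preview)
The paper does not contain a proof of this theorem: it is stated in the Introduction as a quoted result, with explicit attribution to \cite[Theorem~3.1]{CGP2} and \cite[Theorem~2.1]{CGP1}, and is used only as background input for the equivalent norm $\norm{\cdot}_2$ in Section~3.2. There is therefore nothing in the present paper to compare your proposal against.

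As a standalone sketch, your strategy is the natural one and aligns with how such results are typically proved: start from the $H^2$ Wold decomposition for the isometry $T_B$, then transfer to $\dcl_\alpha$ via a two-sided norm comparison. You correctly identify the genuine difficulty, namely that the blocks $B^n\mathcal{K}_B$ are not $\dcl_\alpha$-orthogonal for a general finite Blaschke product, so off-diagonal terms must be controlled. Your proposal remains a plan rather than a proof at exactly that point: the Schur-test/interpolation argument is asserted but not carried out, and the required decay estimate on $|\langle B^n h, B^m g\rangle_\alpha|$ in $|n-m|$ is the substantive analytic work. If you want to complete the argument, that estimate (or an equivalent device, such as showing the Gram matrix of the blocks is a bounded perturbation of a diagonal) is what you must actually prove; the rest of your outline is sound.
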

The nearly invariant subspaces related to the multiplication operator $M_u$ in the Hilbert space of analytic functions has been studied by C. Erard in \cite{ER}. In fact Erard gave the definition of \textquotedblleft nearly invariant under division by u \textquotedblright,
which is same as \textquotedblleft nearly $M_u^{-1}$ invariant\textquotedblright, a special case of the notion of nearly $T^{-1}$ invariant subspaces for any left invertible operator $T\in \mathcal{B}(\mathcal{H})$ recently introduced by Liang and Partington in \cite{YP} and the definition is the following:
\begin{dfn}\cite[Definition 1.2]{YP}
Let $\hilh$ be a separable infinite dimensional Hilbert space and $T\in \mathcal{B}(\hilh ) $ be left invertible. Then a closed subspace $\mcl \subset \hilh $ is said to be nearly $T^{-1}$ invariant if for every $g\in \hilh $ such that $Tg\in \mcl $ then it holds that $g\in \mcl $.	
\end{dfn}

It is well known that the shift operator acting on a separable Hilbert space is a generalization of the unilateral shift $S$ and the operator $T_B$ on $\hdcm$. Recall that, an operator $T \in \mathcal{B}(\hilh)$ is said to be a shift opertor if it is an isometry and $T^*$ converges strongly to zero that is, $\norm {T^{*n}h}\to 0$ as $n\to \infty$ for all $h\in \hilh$ \cite{RR}. Equivalently, an isometry $T\in \mathcal{B}(\mathcal{H})$ is a shift operator if and only if  $T$ is pure that is, $\cap_{n=0}^\infty T^n\hilh =\{0\}$. Therefore it is easy to observe that shift operator is an isometry and left invertible. Moreover, the multiplicity of a shift operator $T\in \mathcal{B}(\mathcal{H})$  is defined to be the dimension of $Ker T^* = \hilh \ominus T\hilh $.  
As we have discussed earlier, Liang and Partington have characterized nearly $T^{-1}$ invariant subspaces for a shift operator $T\in \mathcal{B}(\mathcal{H})$ with finite multiplicity and furthermore  they also studied the nearly $T_B^{-1}$ invariant subspaces corresponding to a  finite Blaschke product $B$ in a scale of Dirichlet-type spaces $\dcl_\alpha$ for $\alpha \in [-1,1]$ in \cite{YP}. The main aim of this article is to first introduce the notion of nearly $T^{-1}$ invariant subspaces with finite defect for a shift operator $T\in \mathcal{B}(\mathcal{H})$ with finite multiplicity and then characterize those subspaces  in terms of backward shift invariant subspaces in vector-valued Hardy spaces. Furthermore, we also study the nearly $T_B^{-1}$ invariant subspaces in a scale of Dirichlet-type spaces $\dcl_\alpha$ for $\alpha \in [-1,1]$  corresponding to a  finite Blaschke product $B$ and provide a concrete representation of it by generalizing some results of C. Erard \cite{ER} in our context.

The rest of the paper is organized as follows: In Section 2, we introduce the notion of nearly $T^{-1}$ invariant subspaces with finite defect for an left invertible operator $T\in \mathcal{B}(\mathcal{H})$ and give a chracterization of nearly $T^{-1}$ invariant subspaces with finite defect for the shift operator $T\in \mathcal{B}(\mathcal{H})$ with finite multiplicity. In Section 3, we deal with the study of nearly $T_B^{-1}$ invariant subspaces with finite defect corresponding to a finite Blaschke product $B$ in a scale of Dirichlet-type spaces $\dcl_\alpha$ for $\alpha \in [-1,1]$.

\section{Characterization of Nearly Invariant Subspaces with finite defect for the Shift Operator}

In this section, we study nearly $T^{-1}$ invariant subspaces with finite defect for a shift operator $T\in \mathcal{B}(\hilh)$ having finite multiplicity. Now we introduce the notion of nearly $T^{-1}$ invariant subspaces with finite defect for any left invertible operator $T\in \mathcal{B}(\hilh)$ as a generalization of nearly $T^{-1}$ invariant subspaces.
\begin{dfn}
	Let $T\in \mathcal{B}(\hilh)$ be left invertible. Then a closed subspace $\mcl$ of $\hilh$ is said to be nearly $T^{-1}$ invariant with finite defect $p$ if there exists a $p$ dimensional subspace $\fcl $ (which may be taken to be  orthogonal to $\mcl$) such that for any $f\in \hilh $ with $Tf\in \mcl $, then it holds that $f\in \mcl \oplus \fcl$. 
\end{dfn}
The following lemma gives a connection of nearly invariant subspaces with same defect between similar operators.
\begin{lma}\label{a1}
	Let  $T_1 \in \mathcal{B}(\hilh _1)$ and $T_2\in \mathcal{B}(\hilh _2)$ be two left invertible operators such that they are similar by some invertible operator $V:\hilh _1 \rightarrow \hilh _2$, so that $T_2=VT_1V^{-1}$. Let $\mcl $ be a nearly $T_1^{-1}$ invariant subspace with defect $p$ in $\hilh _1$; then $V(\mcl)$ is also a nearly $T_2^{-1}$ invariant subspace with the same defect $p$ in $\hilh _2$.
\end{lma}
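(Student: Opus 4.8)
The plan is to show that the image $V(\fcl)$ of the defect space of $\mcl$ already serves as a defect space for $V(\mcl)$, and then, to honour the parenthetical requirement that the defect space be orthogonal to the subspace, to replace $V(\fcl)$ by its compression onto $V(\mcl)^{\perp}$. First I would record two preliminary facts that make the definition applicable on the $\hilh_2$ side. Since $V$ is invertible and bounded with bounded inverse, it is a homeomorphism, so $V(\mcl)$ is a closed subspace of $\hilh_2$; and since similarity preserves left invertibility (if $L_1 T_1 = I$ then $(V L_1 V^{-1}) T_2 = I$), the operator $T_2$ is automatically left invertible, although this is already granted in the hypotheses.

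The heart of the argument is a single intertwining identity. From $T_2 = V T_1 V^{-1}$ one obtains, upon multiplying on the left by $V^{-1}$, the relation $V^{-1} T_2 = T_1 V^{-1}$. Now take $g \in \hilh_2$ arbitrary with $T_2 g \in V(\mcl)$, and set $f := V^{-1} g \in \hilh_1$. Then $T_1 f = T_1 V^{-1} g = V^{-1} T_2 g$, and since $T_2 g \in V(\mcl)$ we have $V^{-1} T_2 g \in \mcl$; hence $T_1 f \in \mcl$. Invoking the near $T_1^{-1}$ invariance of $\mcl$ with its defect space $\fcl$ then yields $f \in \mcl \oplus \fcl$.

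It remains to transport this back through $V$. Writing $f = m + e$ with $m \in \mcl$, $e \in \fcl$ and applying $V$ gives $g = Vf = Vm + Ve \in V(\mcl) + V(\fcl)$. Because $V$ is injective and the sum $\mcl + \fcl$ is direct, the sum $V(\mcl) + V(\fcl)$ is again direct and $\dim V(\fcl) = \dim \fcl = p$; thus $V(\mcl)$ is nearly $T_2^{-1}$ invariant with defect $p$, the witnessing space being $V(\fcl)$.

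The only genuine subtlety, and the step I expect to need the most care, is the orthogonality clause, since a similarity need not preserve orthogonality and so $V(\fcl)$ will in general fail to be orthogonal to $V(\mcl)$. To recover an orthogonal defect space I would let $Q$ denote the orthogonal projection of $\hilh_2$ onto $V(\mcl)^{\perp}$ (well defined as $V(\mcl)$ is closed) and set $\fcl' := Q\bigl(V(\fcl)\bigr)$. Since $Qe = 0$ for $e \in V(\fcl)$ forces $e \in V(\mcl) \cap V(\fcl) = \{0\}$, the map $Q$ is injective on $V(\fcl)$, so $\dim \fcl' = p$ and $\fcl' \perp V(\mcl)$; decomposing each $Ve$ above into its $V(\mcl)$- and $V(\mcl)^{\perp}$-components then shows $g \in V(\mcl) \oplus \fcl'$. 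This furnishes the defect space in the orthogonal form demanded by the definition while preserving the dimension $p$.
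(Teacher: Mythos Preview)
Your proof is correct and follows essentially the same route as the paper's: pull $g$ back via $V^{-1}$, apply the near $T_1^{-1}$ invariance hypothesis, and push forward by $V$ to land in $V(\mcl)+V(\fcl)$. Your treatment is in fact more careful than the paper's, which stops at $g\in V\mcl \oplus V\fcl$ without addressing either the closedness of $V(\mcl)$ or the orthogonality clause; your projection argument for $\fcl'$ is a clean way to restore orthogonality while preserving the dimension.
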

\begin{proof}
	Suppose $g\in \hilh_2$ such that  $T_2 g \in V\mcl $, then we want to show $g\in V\mcl \oplus V\fcl$, where $\fcl$ is the $p$ dimensional defect space for $\mcl$ in $\hilh_1$.
	Since $T_2g =VT_1V^{-1}g \in V\mcl$, then it implies that $T_1V^{-1}g \in V\mcl$. Moreover, since $\mcl $ is nearly $T_1^{-1}$ invariant with defect space $\fcl$, then we must have $V^{-1}g \in \mcl \oplus \fcl $. Thus $g\in V(\mcl \oplus \fcl)= V\mcl \oplus V\fcl$, proving that $V(\mcl)$ is a nearly $T_2^{-1}$ invariant subspace with defect $p$ in $\hilh _2$.
\end{proof}
 Now onwards we always assume $T\in \mathcal{B}(\hilh)$ is a shift operator with multiplicity $m$ throughout this section. Let $\{e_1,e_2,\ldots ,e_m  \}$ be an orthonormal basis of $\kcl =\hilh \ominus T\hilh$ and let  $\delta _j^m =(0,0,\ldots ,1,\ldots ,0)$ with $1$ in the jth place be an orthonormal basis of $\kcl _z =\hdcm \ominus z\hdcm$ for $j=1,2,\ldots ,m$. By considering the following two orthogonal decompositions 
$$ \hilh =\bigoplus _{i=0}^\infty T^i\kcl ~\text{and}~ \hdcm =\bigoplus _{i=0}^\infty z^i\kcl_z   ,$$
 we have an unitary mapping $U :\hilh \to \hdcm $ defined by 
\begin{equation}\label{equni}
U(T^ie_j)=z^i\delta ^{m}_j.
\end{equation}
Therefore the following  diagram \ref{diag1} corresponding to the shift operator $T:\hilh \to \hilh $ with multiplicity $m$ and the unilateral shift $S:\hdcm \to \hdcm$ is commutative.  

\begin{align}\label{diag1}
\begin{matrix}
\hilh\xrightarrow[\hspace*{3cm}]{T}\hilh\\
U\Bigg\downarrow\hspace*{3.5cm}\Bigg\downarrow U\\
\hdcm\xrightarrow[\hspace*{3cm}]{S}\hdcm
\end{matrix}
\end{align}
Therefore from the above commutative diagram \ref{diag1} we get
\begin{equation}\label{d}
S^nU= UT^n , \forall n\in \N\cup \{0\}.
\end{equation}
 Now onwards we denote by $P_\mcl$ as the orthogonal projection of $\hilh$ onto a closed subspace $\mcl$ of $\hilh$. The following lemma gives an upper bound concerning the dimension of the subspace 
 $\mcl \ominus (\mcl \cap T\hilh )$:
\begin{lma}\label{a}
	Let $T\in \mathcal{B}(\mathcal{H})$ be a shift operator with multiplicity m and let $\mcl$ be a non trivial closed subspace of $\hilh$ such that $\mcl \nsubseteq T\hilh $ (that means $\mcl$ is not properly contained in $T\hilh$). Then
	\begin{equation}
	1\leq r:=dim(\mcl \ominus (\mcl \cap T\hilh ))\leq m.
	\end{equation}
\end{lma}
\begin{proof}
Since $T$ is a shift operator with multiplicity $m$, then $dim(\hilh \ominus T\hilh)=m$. Moreover, since $\mcl \nsubseteq T\hilh $, then $\mcl \ominus (\mcl \cap T\hilh ) \neq \{0\}$. Let $\{e_1,\ldots ,e_m  \}$ be an orthonormal basis of $\hilh \ominus T\hilh$. Our claim is that $\{P_{\mcl} e_1,\ldots ,P_{\mcl} e_m  \}$ generates $\mcl \ominus (\mcl \cap T\hilh )$. Indeed, for any $g\in \mcl \ominus (\mcl \cap T\hilh )$	with $\langle g,P_\mcl e_i\rangle =0$ for all $ i\in\{1,\ldots ,m\}$ implies $g=0$ and hence  $1\leq r:=dim(\mcl \ominus (\mcl \cap T\hilh ))\leq m$. 
\end{proof}
Next by using condition \ref{d} and the above lemma \ref{a} we have the following result.
\begin{lma}\label{b}
	Let $\mcl$ be a non trivial nearly $T^{-1}$ invariant subspace with finite defect $p$ and let $G_0=[g_1,g_2,\ldots ,g_r]^t$ be an $r\times 1$ matrix with $\{g_1,g_2,\ldots ,g_r\}$ is an orthonormal basis of $\mcl \ominus (\mcl \cap T\hilh )$ (note that the superscript $t$ denotes the transpose of a matrix). Then $F_0=[Ug_1,Ug_2,\ldots ,Ug_r]^t$ be an $r\times m$ matrix with $\{Ug_1,Ug_2,\ldots ,Ug_r\}$ is an orthonormal basis for $U\mcl \ominus (U\mcl \cap z\hdcm )$.
\end{lma}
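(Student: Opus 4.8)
The plan is to exploit the fact that $U$ is a \emph{unitary} operator, so that it automatically preserves orthonormality, closedness, intersections, and orthogonal complements; the only genuine structural input required is the identification of the image of $T\hilh$ under $U$. First I would record that, since $\{g_1,g_2,\ldots,g_r\}$ is orthonormal and $U$ is unitary, the set $\{Ug_1,Ug_2,\ldots,Ug_r\}$ is again orthonormal. Thus the entire content of the lemma reduces to identifying the closed subspace that these vectors span, that is, to proving
\[
U\big(\mcl \ominus (\mcl \cap T\hilh)\big) = U\mcl \ominus (U\mcl \cap z\hdcm).
\]

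The key step is to show $U(T\hilh)=z\hdcm$. Applying \eqref{d} with $n=1$ gives $SU=UT$, and since $U$ maps $\hilh$ onto $\hdcm$ we have $U\hilh=\hdcm$; hence $U(T\hilh)=S(U\hilh)=S\hdcm=z\hdcm$. Here $T\hilh$ is closed because $T$ is an isometry, so its image under the unitary $U$ is a well-defined closed subspace. Because $U$ is a bijection, taking images commutes with intersection, and therefore
\[
U(\mcl \cap T\hilh)=U\mcl \cap U(T\hilh)=U\mcl \cap z\hdcm.
\]

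Finally, since $U$ is unitary it carries the orthogonal complement of one closed subspace inside another onto the corresponding orthogonal complement of the images: for the nested pair $\mcl \cap T\hilh \subseteq \mcl$ one has $U(\mcl \ominus (\mcl \cap T\hilh))=U\mcl \ominus U(\mcl \cap T\hilh)$, using that $U$ preserves orthogonal complements. Substituting the identity of the previous paragraph gives
\[
U\big(\mcl \ominus (\mcl \cap T\hilh)\big)=U\mcl \ominus (U\mcl \cap z\hdcm),
\]
and $U\mcl$ is closed because $U$ is unitary. Combining this with the orthonormality noted at the outset shows that $\{Ug_1,Ug_2,\ldots,Ug_r\}$ is an orthonormal basis of $U\mcl \ominus (U\mcl \cap z\hdcm)$, as claimed. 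I do not anticipate a serious obstacle: once $U(T\hilh)=z\hdcm$ is in hand the argument is essentially bookkeeping, and the only point requiring a moment's care is the justification that passing to images under the bijection $U$ commutes both with intersection and with the formation of orthogonal complements inside $U\mcl$.
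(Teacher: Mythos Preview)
Your argument is correct and is exactly the kind of routine verification the paper has in mind: the authors simply write ``The proof is straightforward and we leave it to the reader,'' after noting that the result follows from condition \eqref{d} and Lemma~\ref{a}. Your use of $SU=UT$ to obtain $U(T\hilh)=z\hdcm$, together with the standard facts that a unitary bijection preserves orthonormality, intersections, and relative orthogonal complements, fills in precisely these omitted details.
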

\begin{proof}
The proof is straightforward and we leave it to the reader.	
\end{proof}

Going further, we need the following useful lemma due to Liang and Partington in \cite{YP}.
\begin{lma}\cite[Lemma 2.3]{YP}\label{c}
Suppose that $T : \hilh \to \hilh$ is a shift operator, and let $U$ be as \ref{d}. Then
	\begin{equation}
	U^*[(Ug)h]=h(T)g,
	\end{equation}
	for any $g\in \hilh ,h\in \hdcc $.
\end{lma}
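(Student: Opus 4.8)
The plan is to show that the unitary $U$ from \eqref{d} intertwines the functional-calculus operator $h(T)$ on $\hilh$ with multiplication by the scalar function $h$ on $\hdcm$; the asserted identity is then just this intertwining relation read through $U^*$. First I would record the purely algebraic consequences of \eqref{d}: since $U$ is unitary and $S^nU=UT^n$ for every $n\in\N\cup\{0\}$, we get $S^n=UT^nU^*$, hence $U^*S^n=T^nU^*$ and $U^*S^nU=T^n$. These are the only structural facts about $U$ that the proof needs.

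Next I would verify the identity on monomials. For $h(z)=z^n$ the product $(Ug)h$ is exactly $z^n(Ug)=S^n(Ug)$, so
\begin{equation*}
U^*\big[(Ug)z^n\big]=U^*S^n(Ug)=T^nU^*Ug=T^ng=h(T)g,
\end{equation*}
where I used $U^*S^n=T^nU^*$, $U^*U=I$, and $(z^n)(T)=T^n$. Since both $h\mapsto U^*[(Ug)h]$ and $h\mapsto h(T)g$ are linear in $h$, this extends immediately to every polynomial $h$.

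Finally I would pass from polynomials to a general $h\in\hdcc$ by density. Writing $h_N=\sum_{n=0}^{N}a_nz^n$ for the partial sums of $h=\sum_{n\ge 0}a_nz^n$, the polynomial case gives $U^*[(Ug)h_N]=h_N(T)g=\sum_{n=0}^{N}a_nT^ng$ for each $N$, and I would let $N\to\infty$. The two sides are coupled through $U$: applying $U$ and using $S^nU=UT^n$ yields $U\big(\sum_{n=0}^{N}a_nT^ng\big)=(Ug)h_N$, so that, because $U$ is an isometry, the sequence $\{\sum_{n=0}^{N}a_nT^ng\}_N$ is Cauchy in $\hilh$ if and only if $\{(Ug)h_N\}_N$ is Cauchy in $\hdcm$ and the two limits correspond. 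Granting that these limits exist, continuity of $U^*$ gives $U^*[(Ug)h]=\lim_N U^*[(Ug)h_N]=\lim_N h_N(T)g=h(T)g$, which is the claim; equivalently, $Uh(T)g=h\cdot(Ug)$.

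The step I expect to be the main obstacle is precisely this last passage to the limit. For arbitrary $g\in\hilh$ and $h\in\hdcc$ the componentwise products in $(Ug)h$ are products of two $H^2$ functions, hence a priori only integrable (of class $H^1$) and need not lie in $\hdcm$, so one must first pin down the natural domain on which both $h(T)g$ and $(Ug)h$ are genuinely square-summable and on which multiplication by $h$ acts continuously. The clean way to do this is to observe that for $h\in \hdinf$ the operator $h(T)=U^*T_hU$ is bounded with $\norm{h(T)}\le \norm{h}_\infty$, carry out the density argument there, and then read the general identity off the isometric correspondence above on the set of $h$ for which $(Ug)h\in\hdcm$. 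Once the boundedness of the relevant multiplication and functional-calculus operators is in place, interchanging the limit with $U^*$ is routine and the identity follows.
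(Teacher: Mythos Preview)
The paper does not supply its own proof of this lemma; it simply quotes the result from \cite[Lemma~2.3]{YP}. Your argument is the natural one and is correct: verify the identity on monomials via $U^*S^nU=T^n$, extend linearly to polynomials, and pass to the limit through the isometry of $U$. The key observation you make, that $U\big(\sum_{n=0}^{N}a_nT^ng\big)=(Ug)h_N$, shows that $\sum_{n\ge 0}a_nT^ng$ converges in $\hilh$ exactly when $(Ug)h\in\hdcm$, with the limits corresponding under $U$; this is precisely how the identity should be read for general $h\in\hdcc$. You are also right that, taken literally for arbitrary $g\in\hilh$ and $h\in\hdcc$, the product $(Ug)h$ is a priori only in $H^1$, so the formula must be understood on the natural domain where either side makes sense---which is exactly how it is invoked in the proof of Theorem~\ref{mainth1}, where the functions $k_i^0,k_j$ arise as components of an element of $\hdcrp$ and the relevant products are already known to lie in $\hdcm$.
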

 Now we are in a position to state and prove our main result in this section which provides an isometric relation between nearly $T^{-1}$ invariant subspaces with defect $p$ and the backward shift invariant subspaces of $\hdcrp = \hdcr \times \hdcp$.
\begin{thm}\label{mainth1}
	Suppose $T$ is a shift operator with multiplicity $m$
and $\mcl \subset  \hilh$ is a non trivial nearly $T^{-1}$ invariant subspace with defect $p$ and let $\fcl$ be the corresponding $p$ dimensional defect space. Let $F_1=[f_1,f_2,\ldots ,f_p]^t$ be a $p\times 1$ matrix containing an orthonormal basis $\{f_1,f_2,\ldots ,f_p\}$ of $\fcl$.Then\vspace*{0.1in}\\
(i) in the case when $\mcl \nsubseteq T\hilh$, there exista a non negative integer $r^\prime \leq r+p$ and an inner multiplier $\Phi \in H^\infty_{\mathcal{L}(\C^{r^\prime },\C^{r+p}))} (\D)$, unique upto an unitary equivalence such that
\begin{equation}
\mcl= \Big\{f\in \hilh : f= K_0(T)G_0+ TK_1(T)F_1 : (K_0,K_1)\in\hdcrp \ominus \Phi  H^2_{\C ^{r^\prime }}(\D) \Big\},
\end{equation}
where $G_0=[g_1,g_2,\ldots ,g_r]^t$ is an $r\times 1$ matrix with $\{g_1,g_2,\ldots ,g_r\}$ is an orthonormal basis of $\mcl \ominus (\mcl \cap T\hilh )$ and also there exists an isometry $$Q:\mcl \to \hdcrp \quad \text{defined by}\quad Q(f)=(K_0,K_1).$$
\noindent (ii) In the case, when $\mcl \subseteq T\hilh$, there exists a non negative integer $p^\prime \leq p$ and an inner multiplier $\Theta \in H^\infty_{\mathcal{L}(\C^{p^\prime},\C^p))} (\D)$ which is unique upto unitary constant such that
\begin{equation}
\mcl= \Big\{f\in \hilh : f=  TK_1(T)F_1 : K_1\in\hdcp \ominus \  H^2_{\C ^{p^\prime }}(\D) \Big\},
\end{equation}
and also there exists an isometry $$R:\mcl \to \hdcp \quad \text{defined by} \quad R(f)=K_1. $$
\end{thm}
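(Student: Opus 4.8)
The plan is to transport the whole problem to the vector-valued Hardy space $\hdcm$ through the unitary $U$ of \eqref{equni} and then invoke the vectorial characterization of nearly invariant subspaces with finite defect, namely \cite[Theorem 3.5]{CDP}. First I would observe that, since $U$ is unitary and intertwines $T$ and $S$ via \eqref{d}, the similarity Lemma \ref{a1} applies with $V=U$: the image $U\mcl$ is a nearly $S^{-1}$ invariant subspace of $\hdcm$ with the same defect $p$, and its defect space is $U\fcl$, which is orthogonal to $U\mcl$ because $U$ is unitary and $\fcl\perp\mcl$. Here nearly $S^{-1}$ invariance is precisely near invariance under division by $z$ (if $zF\in U\mcl$ then $F\in U\mcl\oplus U\fcl$), i.e. the notion treated by C-D-P, so \cite[Theorem 3.5]{CDP} is applicable to $U\mcl$.

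Assume first $\mcl\nsubseteq T\hilh$, equivalently $U\mcl\nsubseteq z\hdcm$. Set $r=\dim(\mcl\ominus(\mcl\cap T\hilh))$, which satisfies $1\le r\le m$ by Lemma \ref{a}, and recall from Lemma \ref{b} that the rows of $F_0=[Ug_1,\ldots,Ug_r]^t$ form an orthonormal basis of $U\mcl\ominus(U\mcl\cap z\hdcm)$. Applying \cite[Theorem 3.5]{CDP} then produces a nonnegative integer $r'\le r+p$, an inner multiplier $\Phi\in H^\infty_{\mathcal{L}(\C^{r'},\C^{r+p})}(\D)$ (unique up to a constant unitary on the right), and an isometry onto the model space $\hdcrp\ominus\Phi H^2_{\C^{r'}}(\D)$, so that every element of $U\mcl$ can be written as $F_0^t K_0+z(F_1')^t K_1$ with $(K_0,K_1)\in\hdcrp\ominus\Phi H^2_{\C^{r'}}(\D)$, where $F_1'=[Uf_1,\ldots,Uf_p]^t$ encodes the defect basis.

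The decisive step is the translation of this representation back to $\hilh$. Writing $K_0=(k_0^{(1)},\ldots,k_0^{(r)})$ and $K_1=(k_1^{(1)},\ldots,k_1^{(p)})$ componentwise, applying $U^*$ and using the functional-calculus identity $U^*[(Ug)h]=h(T)g$ of Lemma \ref{c} together with $TU^*=U^*S$ (itself a consequence of \eqref{d}), each scalar term $(Ug_j)k_0^{(j)}$ pulls back to $k_0^{(j)}(T)g_j$ and each $z(Uf_j)k_1^{(j)}$ pulls back to $Tk_1^{(j)}(T)f_j$; summing gives $U^*[F_0^t K_0+z(F_1')^t K_1]=K_0(T)G_0+TK_1(T)F_1$. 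This yields the stated description of $\mcl$, and the isometry $Q$ is simply the composition of $U|_{\mcl}$ with the C-D-P isometry, hence itself isometric with $Q(f)=(K_0,K_1)$; the well-definedness of $Q$ (uniqueness of the pair $(K_0,K_1)$ representing a given $f$) and the uniqueness of $\Phi$ up to unitary equivalence are inherited directly from \cite[Theorem 3.5]{CDP} and the Beurling-Lax-Halmos uniqueness of the inner multiplier.

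Finally, case (ii) $\mcl\subseteq T\hilh$ is the degeneration $r=0$: here $\mcl\cap T\hilh=\mcl$, so there is no $G_0$-component, $r+p=p$, and the identical argument with $\Theta$ in place of $\Phi$ and $p'=r'$ produces $f=TK_1(T)F_1$ with $K_1\in\hdcp\ominus\Theta H^2_{\C^{p'}}(\D)$, together with the isometry $R(f)=K_1$. The main obstacle I anticipate is essentially the bookkeeping in this translation: matching the block structure $(K_0,K_1)$ of the C-D-P model space to the two families $G_0$ and $F_1$, and verifying that the functional-calculus substitutions commute with the infinite sums defining elements of $\hdcrp$, so that the pulled-back formula reproduces all of $\mcl$ rather than merely a dense subspace.
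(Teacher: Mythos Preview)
Your proposal is correct and follows essentially the same route as the paper: transport $\mcl$ to $\hdcm$ via the unitary $U$ using Lemma~\ref{a1}, apply \cite[Theorem~3.5]{CDP} to $U\mcl$ (with $F_0$ from Lemma~\ref{b} and defect basis $\{Uf_j\}$), and pull the resulting representation back through Lemma~\ref{c} to obtain $f=K_0(T)G_0+TK_1(T)F_1$, with the isometry $Q$ arising as the composition of $U|_\mcl$ with the C-D-P isometry. Your bookkeeping concern about infinite sums is not an actual obstacle, since Lemma~\ref{c} already handles arbitrary $h\in\hdcc$ and the series defining $K_0(T)G_0$ and $K_1(T)F_1$ converge in $\hilh$ by the isometric norm identity $\norm{f}^2=\norm{K_0}^2+\norm{K_1}^2$ inherited from \cite{CDP}.
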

\begin{proof}
From Lemma \ref{a1} and using \ref{equni}, we say $U\mcl$ is a nearly $S^*$ invariant subspace of $\hdcm $ with defect $p$ and the correosponding defect space is $U\fcl \subseteq \hdcm $.
Therefore by applying our recent Theorem 3.5 (C-D-P) in \cite[Theorem 3.5, case (i)]{CDP} corresponding to nearly $S^*$ invariant subspace with finite defect in vector valued Hardy space $\hdcm $, we have
$$U\mcl =\Bigg\{F\in\hdcm :F(z)= F_0(z)^tK_0(z)+ \sum_{j=1}^{p} zk_j(z)Uf_j(z) : (K_0,k_1,\ldots,k_p)\in \kcl   \Bigg\}, $$ 
where 
$\kcl \subset \hdcr \times \underbrace{\hdcc\times\cdots\times \hdcc}_{p} $ is a closed $S^*\oplus\cdots\oplus S^*$- invariant subspace of the vector valued Hardy space $ H ^2(\mathbb{D},\mathbb{C}^{r+p})$, 
\begin{equation}\label{e}
\norm{F}^2=\norm{K_0}^2+\sum_{j=1}^{p}\norm{k_j}^2,
\end{equation}
 and $F_0$ given in Lemma \ref{b}. Therefore by Beurling-Lax-Halmos theorem on $\hdcrp$, there exists a non negative integer $r^\prime \leq r+p$ and an inner multiplier 
 $\Phi \in H^\infty_{\mathcal{L}(\C^{r^\prime},\C^{r+p}))}(\D) $ unique upto unitary equivalance such that $\kcl =\hdcrp \ominus \Phi H^2_{\C^{r^\prime}}(\D)$. Thus if we consider $f\in \mcl$, then there exists $(K_0,k_1,k_2,\ldots , k_p )\in \kcl$ such that 
 $$Uf=[Ug_1,Ug_2,\ldots ,Ug_r]K_0 +\sum_{j=1}^{p} Sk_jUf_j$$
and
\begin{equation}\label{f}
\norm f ^2=\norm {Uf}^2=\norm{K_0}^2+\sum_{j=1}^{p}\norm{k_j}^2 .
\end{equation}
 Let $K_0=(k^0_1,k^0_2,\ldots ,k^0_r)\in \hdcr,$ then
\begin{align*}
Uf &=[Ug_1,Ug_2,\ldots ,Ug_r]K_0 +\sum_{j=1}^{p} Sk_jUf_j =\sum_{i=1}^{r}(Ug_i)k^0_i +\sum_{j=1}^{p} SUf_jk_j 
\end{align*}
and therefore by using Lemma \ref{c} we get
\begin{align*}
Uf &= \sum_{i=1}^{r}(Ug_i)k^0_i +\sum_{j=1}^{p} SUf_jk_j 
= \sum_{i=1}^{r}U(k^0_i(T)g_i) +\sum_{j=1}^{p} U(Tf_j)k_j \\
  &= \sum_{i=1}^{r}U(k^0_i(T)g_i) +\sum_{j=1}^{p}U(k_j(T)Tf_j) = U(\sum_{i=1}^{r}k^0_i(T)g_i +\sum_{j=1}^{p}T(k_j(T)f_j))\\
  &= U(K_0(T)G_0 + TK_1(T)F_1),
\end{align*}
and hence
$$f=K_0(T)G_0 + TK_1(T)F_1, $$
where $K_1=(k_1,k_2,\ldots ,k_p) \in \hdcp$. 
Therefore $$\mcl= \Big\{f\in \hilh : f= K_0(T)G_0+ TK_1(T)F_1 : (K_0,K_1)\in\hdcrp \ominus \Phi  H^2_{\C ^{r^\prime }}(\D) \Big\}.$$
Moreover, the relation \ref{e} gives the existence of an isometry $V: U\mcl \to \hdcrp \ominus \Phi  H^2_{\C ^{r^\prime }}(\D)  $. Now if we define $Q=VU$, then $Q:\mcl \to \hdcrp \ominus \Phi  H^2_{\C ^{r^\prime }}(\D) $ is an isometry and the isometric relation is given by \ref{f} . This completes the proof of $(i)$.

For case (ii), we assume $\mcl \subset T\hilh$ and hence $\mcl \ominus (\mcl \cap T\hilh )=\{0\}$. Therefore again by applying C-D-P Theorem \cite[Theorem 3.5, case (ii)]{CDP} we have 
$$U\mcl =\Bigg\{F\in\hdcm :F(z)=  \sum_{j=1}^{p} zk_j(z)Uf_j(z) : (k_1,\ldots,k_p)\in \kcl   \Bigg\} $$ 
where 
$\kcl \subset  \underbrace{\hdcc\times\cdots\times \hdcc}_{p} $ is a closed $S^*\oplus\cdots\oplus S^*$- invariant subspace of the vector valued Hardy space $ H ^2(\mathbb{D},\mathbb{C}^{p})$ and 
\begin{equation}\label{g}
\norm{F}^2=\sum_{j=1}^{p}\norm{k_j}^2 .
\end{equation}
Similarly as in case $(i)$, there exists a non negative integer $p^\prime \leq p$ and an inner multiplier $\Theta \in H^\infty_{\mathcal{L}(\C^{p^\prime},\C^{p})}(\D)$ unique upto an unitary equivalance such that $\kcl =\hdcp \ominus \Phi H^2_{\C^{p^\prime}}(\D)$. Moreover, if $K_1=(k_1,k_2,\ldots ,k_p) \in \hdcp$, then $$\mcl= \Big\{f\in \hilh : f= TK_1(T)F_1 : K_1\in\hdcp \ominus \Theta  H^2_{\C ^{p^\prime }}(\D) \Big\}.$$ 
Furthermore, the equation \ref{g} gives an existence of an isometry $W: U\mcl \to \hdcp \ominus \Phi  H^2_{\C ^{p^\prime }}(\D)  $ and therefore, if we define $R=WU$, then $R:\mcl \to \hdcp \ominus \Theta  H^2_{\C ^{p^\prime }}(\D) $ is an isometry. This completes the proof of $(ii)$.
\end{proof}

The following corollary characterize the nearly $T_B^{-1}$ invariant subspace with finite defect $p$ in $\hdcc$ as a consequence of the above Theorem \ref{mainth1}. Note that for any finite Blaschke $B$ with degree $m$, the operator $T_B :\hdcc \to \hdcc $ is a shift operator with multiplicity $m$.
\begin{crlre}\label{h}
	Let $\mcl \subset \hdcc $ be a non trivial nearly $T_B^{-1}$ invariant subspace with defect $p$, where $B$ is a finite Blaschke of degree $m$ having atleast one zero in $\D\setminus \{0\}$. Let $G_0=[g_1,g_2,\ldots ,g_r]^t$ be an $r\times 1$ matrix with $\{g_1,g_2,\ldots ,g_r\}$ is an orthonormal basis of $\mcl \ominus (\mcl \cap T_B\hilh )$ and let $F_1=[f_1,f_2,\ldots ,f_p]^t$ be a $p\times 1$ matrix containing an orthonormal basis $\{f_1,f_2,\ldots ,f_p\}$ of the defct space $\fcl$. Then there exists a non negative integer $r^\prime \leq r+p$ and an inner multiplier $\Phi \in H^\infty_{\mathcal{L}(\C^{r^\prime },\C^{r+p})} (\D)$, unique upto unitary equivalence such that
	\begin{equation}\label{cr}
	\mcl= \Big\{f\in \hilh : f= K_0(T_B)G_0+ T_BK_1(T_B)F_1 : (K_0,K_1)\in\hdcrp \ominus \Phi  H^2_{\C ^{r^\prime }}(\D) \Big\}.
	\end{equation}
\end{crlre}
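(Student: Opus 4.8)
The plan is to obtain this corollary as a direct specialization of Theorem \ref{mainth1}(i) to the concrete setting $\hilh = \hdcc$ and $T = T_B$, so the bulk of the work consists in checking that the hypotheses of the theorem are met and in translating the abstract functional calculus into the concrete operators $K_0(T_B)$ and $K_1(T_B)$. The first ingredient is already recorded in the text immediately preceding the statement: for a finite Blaschke product $B$ of degree $m$ the operator $T_B$ is a shift operator of multiplicity $m$, since $B$ inner makes $T_B$ an isometry, a (non-constant) finite Blaschke product satisfies $\bigcap_{n\ge 0} B^n\hdcc = \{0\}$ so that $T_B$ is pure, and $\dim(\hdcc \ominus B\hdcc) = \dim \mathcal{K}_B = m$. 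Thus $T_B$ falls under the standing assumption of this section.

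With that in hand, I would invoke Theorem \ref{mainth1}(i) verbatim, reading $\hilh = \hdcc$ and $T = T_B$. The hypothesis that $B$ has a zero in $\D\setminus\{0\}$, together with the non-triviality of $\mcl$, is what I would use to ensure $\mcl \nsubseteq T_B\hdcc$, placing us in case (i); Lemma \ref{a} then gives $1 \le r := \dim\bigl(\mcl \ominus (\mcl \cap T_B\hdcc)\bigr) \le m$, so that the matrix $G_0$ built from an orthonormal basis $\{g_1,\dots,g_r\}$ has the right size, while $F_1$ carries the orthonormal basis $\{f_1,\dots,f_p\}$ of the defect space $\fcl$. The theorem then supplies the non-negative integer $r' \le r+p$, the inner multiplier $\Phi \in H^\infty_{\mathcal{L}(\C^{r'},\C^{r+p})}(\D)$ unique up to unitary equivalence, and the representation $f = K_0(T_B)G_0 + T_B K_1(T_B)F_1$ with $(K_0,K_1)$ ranging over $\hdcrp \ominus \Phi H^2_{\C^{r'}}(\D)$, which is exactly \ref{cr}. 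The one point I would make explicit is that the abstract functional calculus occurring in Theorem \ref{mainth1} coincides here with the genuine operators $K_0(T_B)$, $K_1(T_B)$: this is furnished by Lemma \ref{c} applied through the unitary $U : \hdcc \to \hdcm$ of \ref{equni}, under which $U^*[(Ug)h] = h(T_B)g$ for $g,h \in \hdcc$.

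The hard part here is essentially organizational rather than analytic: all the genuine content (the C-D-P decomposition, the Beurling--Lax--Halmos step producing $\Phi$, the isometry, and the uniqueness) is already encapsulated in Theorem \ref{mainth1}. The chief thing to verify is the dimensional bookkeeping, namely that $K_0$ lands in $\hdcr$ and $K_1$ in $\hdcp$ with $r \le m$ and $r' \le r+p$, all of which are inherited directly from the theorem once $T_B$ is identified as a shift of multiplicity $m$. I therefore do not expect to need any new estimate; the corollary is Theorem \ref{mainth1}(i) reread in the coordinates furnished by the Blaschke product $B$.
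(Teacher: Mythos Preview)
Your overall strategy is exactly the paper's: the corollary is nothing more than Theorem~\ref{mainth1}(i) specialized to $\hilh=\hdcc$ and $T=T_B$, using the observation (recorded just before the corollary) that $T_B$ is a shift of multiplicity $m$.

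There is, however, one incorrect step in your reasoning. You write that the hypothesis ``$B$ has a zero in $\D\setminus\{0\}$'' together with the non-triviality of $\mcl$ is what places you in case~(i), i.e.\ what forces $\mcl\nsubseteq T_B\hdcc$. This implication is false. For instance, if $B$ has a single zero at some $a\neq 0$ and you take $\mcl=\mathrm{span}\{B\}$, then $\mcl$ is non-trivial, is contained in $T_B\hdcc$, and is nearly $T_B^{-1}$ invariant with defect~$1$ (since $T_Bg\in\mcl$ forces $g$ to be a constant, which lies outside $\mcl$). So the zero condition on $B$ does not by itself rule out case~(ii). What actually places the corollary in case~(i) is simply the \emph{stated} hypothesis that $\{g_1,\ldots,g_r\}$ is an orthonormal basis of $\mcl\ominus(\mcl\cap T_B\hdcc)$: this already presupposes $r\ge 1$, hence $\mcl\nsubseteq T_B\hdcc$. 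Once you replace your incorrect justification with this observation, the rest of your argument goes through verbatim.
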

The following example gives a better understanding of the above corollary. 
\begin{xmpl}
Let us define $B _a(z)=\dfrac{a-z}{1-\overline{a}z}$ for any $a\in \D\setminus\{0\}$. Now consider the subspace $$\mcl = B_a(z).\bigg\{\bigvee\{1,z^2,z^6,z^8,z^{10},\ldots \} \oplus \bigvee\{z,z^3,z^5,\cdots ,z^{2m+1} \}  \bigg\}$$	for some $m\in \N \cup \{0\}$. Then $\mcl$ is a nearly $T^*_{z^2}$ invariant subspace of $\hdcc$ with defect $1$. It is easy to observe that $dim(\mcl \ominus (\mcl \cap T_{z^2}\hdcc ))=2$, $G_0=B_a(z).[1,z]^t$ and the defect space is $\fcl =\langle z^4\phi _a(z)\rangle $ with$F_1=[z^4\phi _a(z)]$. Therefore for any $f\in \mcl$, we have 
$$f(z) =\bigg[\sum_{k=0}^{\infty}a_{k1}z^{2k},\sum_{k=0}^{\infty}a_{k2}z^{2k}\bigg]G_0(z)+ T_{z^2}\bigg[\sum_{k=0}^{\infty}b_{k}z^{2k}\bigg]F_1,$$ where the constants $a_{k1},a_{k2}$ and $b_k$ satisfies the following:
\begin{equation*}
\begin{cases*}
a_{k1}\in \C ~for~ k\in \{0,1\} ~and~ a_{k1}=0 ~for~ k\geq 2, \\
a_{k2}\in \C ~for~ k\in \{0,1,\cdots ,m\} ~and~ a_{k2}=0 ~for~ k\geq m+1, \\
b_k \in \C ~for~ k\geq 0 .
\end{cases*}
\end{equation*}
Moreover, the equation \ref{cr} along with above discussions conclude
\begin{equation*}
\mcl= \Big\{f\in \hilh : f= K_0(T_{z^2})G_0+ T_{z^2}K_1(T_{z^2})F_1 : (K_0,K_1)\in H^2_{\C ^{2+1}}(\D) \ominus \Phi  \hdcc \Big\},
\end{equation*}
where $\Phi \in  H^\infty_{\mathcal{L}(\C,\C^3)} (\D)$ is an inner multiplier such that $\Phi (z)=(z^2,z^{m+1},0)\in \C ^3$.

\end{xmpl}
\section{Description of Nearly $T_B^{-1}$ Invariant Subspces with Defect for Finite Blaschke $B$ in $\dcl_\alpha$ Spaces}
In this section we discuss about nearly $T_B^{-1}$ invariant subspaces with finite defect corresponding to any finite Blaschke product $B$ in a scale of $\dcl_\alpha$ spaces for $\alpha \in [-1,1]$. Recall that any finite Blaschke product $B$ is a multiplier of each $\dcl_\alpha$, that is the multiplication operator $T_B :\dcl_\alpha \to \dcl_\alpha$ is defined everywhere and  bounded. Moreover, the operator  $T_B$ is bounded below but not an isometry. We refer to the reader concerning the work of Lance and Stessin \cite{LS} in connection with the study of multiplication invariant subspaces of Hardy spaces. In \cite{ER} C. Erard studied the nearly invariant subspaces corresponding to lower bounded multiplication operator $M_u$ on the Hilbert space of analytic functions $\hilh$ and there are four conditions concerning the pairs $(\hilh,u)$ which are as follows: 
\begin{enumerate}[(i)]
	\item $\hilh$ is a Hilbert space and a linear subspace of $\mathcal{O}(\wcl): = \big\{f:\wcl \to \C| ~f~ \text{is analytic} \big\} $,
	where $\wcl$ is an open subset of $\C ^d ~(d\in \N)$,
	\item $u\in \mathcal{O}(\wcl)$  satisfies $uh\in \hilh  $ for all $h\in \hilh$,
	\item for all $w\in \wcl$ the evaluation $\hilh \to \C$, $h\to h(w)$ is continuous, 
	\item there exists $c > 0$ such that for all $h \in \hilh$ $c\norm h _\hilh\leq \norm{uh}_\hilh$.
\end{enumerate}
Corresponding to the above pair $(\hilh ,u)$,  the \emph{lower bound} of the multiplication operator $M_u$ relative to the norm $\norm{.} _\hilh$ is defined by
\begin{equation}\label{lbd}
\gamma _{\hilh ,M_u}= sup\{c>0:\forall h\in \hilh , c\norm{h}_\hilh\leq \norm{uh}_\hilh \} \in (0,\infty).
\end{equation} 
For simplicity we denote $\gamma _{\hilh ,M_u}$ by $\gamma$. In particular for the pair $(\hilh, u(z)=z)$, Erard gives a connection between nearly backward shift invariant subspaces in $\hilh$ and a backward shift invariant subspaces in $H^2_{\C}(\D)$ (see Theorem 5.1 in \cite{ER}). Note that the operator $T_B:\dcl_{\alpha}\rightarrow \dcl_{\alpha}$ is more general than $M_z:H^2_{\C}(\D)\rightarrow H^2_{\C}(\D)$ and the characterizations for nearly $T_B^{-1}$ invariant subspaces in $\dcl_{\alpha}$ for $\alpha\in [-1,1]$ corresponding to the finite Blaschke product $B$ is due to Liang and Partington (see Theorem 3.4 and Theorem 3.7 in \cite{YP} ) by applying some results of Erard \cite{ER}. Here our main aim is to characterize nearly $T_B^{-1}$ invariant subspaces with finite defect in $\dcl_{\alpha}$ for $\alpha\in [-1,1]$ corresponding to the finite Blaschke product $B$. To achieve our goal we need to first extend two important results (namely Approximation Lemma and Factorization Theorem) due to Erard \cite{ER}. Before we proceed note that  if $T:\hilh \to \hilh $ is a bounded operator that is bounded from below, then $T$ has closed range and $T^*T$ is invertible. The following lemma is a generalization of Lemma 2.1. in \cite{ER}.

\begin{lma}[Approximation Lemma]\label{lm}
Let $\hilh$ be a Hilbert space and let $T:\hilh \to \hilh $	be a bounded operator such that for all $h\in \hilh $, $\norm h _\hilh \leq \norm {Th} _\hilh $. Suppose $\mcl$ is a nearly $T^{-1}$ invariant subspace of $\hilh$ with defect $p$ (i.e. the dimension of the defect space $\fcl$ is  $p$). We set $R=(T^*T)^{-1}T^*P_{\mcl \cap T\hilh}$, $Q=P_{\mcl \ominus (\mcl \cap T\hilh)}$, $S=P_\fcl$. Then $\norm R \leq 1$, and for all $h\in \mcl $ and $m\in \N$, we have 
\begin{equation}\label{eqlmma1}
h=\sum_{k=0}^{m}T^kQR^kh +T^{m+1}R^{m+1}+T\sum_{k=1}^{m}T^{k-1}SR^kh 
\end{equation}
and
\begin{equation}\label{eqlmma2}
 \norm h ^2_\hilh \geq \sum_{k=0}^{\infty}\norm {QR^kh} _\hilh ^2 +\sum_{k=1}^{\infty} \norm {SR^kh} ^2_\hilh .
\end{equation}
\end{lma}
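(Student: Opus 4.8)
The plan is to reduce everything to two elementary facts about $R$ together with a single ``one-step'' identity that is then iterated. First I would exploit that $\|h\|_\hilh\le\|Th\|_\hilh$ makes $T$ injective with closed range, so $T^*T$ is invertible and $(T^*T)^{-1}T^*$ is the left inverse of $T$, acting as the genuine inverse of $T$ on $T\hilh$. Writing $P_0:=P_{\mcl\cap T\hilh}$, I note that for any $h$ we have $P_0h\in T\hilh$, say $P_0h=Tf$ with $f$ unique; then $Rh=(T^*T)^{-1}T^*Tf=f$, so that $TRh=P_0h$, and $\|Rh\|=\|f\|\le\|Tf\|=\|P_0h\|\le\|h\|$, giving $\|R\|\le1$. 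Since $\fcl=\operatorname{ran}S$ is orthogonal to $\mcl$ (hence to $\operatorname{ran}Q\subseteq\mcl$ and to $\mcl\cap T\hilh$), I also record the two vanishing relations $QS=0$ and $RS=0$, the latter because $P_0S=0$.

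The genuinely non-formal step is next. For $g\in\mcl$, the orthogonal decomposition $\mcl=\big(\mcl\ominus(\mcl\cap T\hilh)\big)\oplus(\mcl\cap T\hilh)$ gives $g=Qg+P_0g=Qg+TRg$, the \emph{base relation}. Because $P_0g=TRg$ lies in $\mcl\cap T\hilh\subseteq\mcl$, \emph{near $T^{-1}$ invariance} with defect space $\fcl$ forces $Rg\in\mcl\oplus\fcl$. Using $RS=0$ one checks that $R$ maps $\mcl\oplus\fcl$ into itself; then for $w\in\mcl\oplus\fcl$ I split $w=(I-S)w+Sw$ with $(I-S)w\in\mcl$, apply the base relation to $(I-S)w$, and use $Q(I-S)=Q$, $R(I-S)=R$ (consequences of $QS=RS=0$) to obtain the key one-step identity
$$Rw = QRw + SRw + TR^2w,\qquad w\in\mcl\oplus\fcl.$$

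With this identity the expansion \eqref{eqlmma1} (where I read the remainder term as $T^{m+1}R^{m+1}h$ and note $T\sum_{k=1}^mT^{k-1}SR^kh=\sum_{k=1}^mT^kSR^kh$) follows by induction on $m$. The base case $m=0$ is precisely $h=Qh+TRh$. For the inductive step I expand only the remainder $T^{m+1}R^{m+1}h$ by applying the one-step identity to $w=R^mh\in\mcl\oplus\fcl$, namely $R^{m+1}h=QR^{m+1}h+SR^{m+1}h+TR^{m+2}h$; substituting and reindexing the two finite sums yields the formula at level $m+1$.

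Finally, for \eqref{eqlmma2} I would turn the one-step identity into a norm estimate. For $w\in\mcl\oplus\fcl$, orthogonality of $\fcl$ and $\mcl$ gives $\|w\|^2=\|(I-S)w\|^2+\|Sw\|^2$, while the base relation applied to $(I-S)w\in\mcl$ gives $\|(I-S)w\|^2=\|Qw\|^2+\|TRw\|^2\ge\|Qw\|^2+\|Rw\|^2$ (using $\|Rw\|\le\|TRw\|$), so
$$\|w\|_\hilh^2 \ge \|Qw\|_\hilh^2 + \|Rw\|_\hilh^2 + \|Sw\|_\hilh^2,\qquad w\in\mcl\oplus\fcl.$$
Iterating this with $w=R^kh$ (each of which lies in $\mcl\oplus\fcl$) telescopes to $\|h\|_\hilh^2\ge\sum_{k=0}^m\|QR^kh\|_\hilh^2+\sum_{k=1}^m\|SR^kh\|_\hilh^2+\|R^{m+1}h\|_\hilh^2$; discarding the last nonnegative term and letting $m\to\infty$ gives the stated inequality. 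I expect the main obstacle to be exactly the middle step: identifying $TRg=P_0g\in\mcl$ so that near invariance applies to yield $Rg\in\mcl\oplus\fcl$, and then upgrading this to the one-step identity valid on all of $\mcl\oplus\fcl$ so that the induction and the telescoping both close; after that the remaining arguments are routine.
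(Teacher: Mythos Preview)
Your proof is correct and follows essentially the same route as the paper's: establish $TRh=P_{\mcl\cap T\hilh}h$ and $\|R\|\le 1$, use near invariance to get $Rh\in\mcl\oplus\fcl$ for $h\in\mcl$, derive the one-step recursion $R^mh=QR^mh+SR^mh+TR^{m+1}h$, and then induct and telescope. The only cosmetic difference is that you make the relations $QS=0$ and $RS=0$ explicit and state the recursion uniformly for all $w\in\mcl\oplus\fcl$ (so that the identity $w=Qw+Sw+TRw$ on this subspace, specialized to $w=R^mh$, gives the step), whereas the paper leaves these orthogonality facts implicit and phrases the recursion directly for $R^mh$; the arguments are otherwise identical.
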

\begin{proof}
Consider $h\in \hilh $ and write $P_{\mcl \cap T\hilh}(h) =Th_0$. Then we have 
\begin{equation}\label{i}
TRh =T(T^*T)^{-1}T^*Th_0=Th_0=P_{\mcl \cap T\hilh }(h).
\end{equation}  
Thus for any $h\in \hilh $, we have $\norm {Rh}\leq \norm {TRh} =\norm {P_{\mcl \cap T\hilh}(h)}\leq \norm {h}$ and hence $\norm R \leq 1$.
Suppose $h\in \mcl$ and therefore by using \ref{i} we conclude that  $TRh \in \mcl$. Since $\mcl$ is a nearly $T^{-1}$ invariant subspace with defect $p$, then we have 
\begin{equation}\label{j}
Rh \in \mcl \oplus \fcl.
\end{equation} 
Moreover, by using \ref{i} and since $T$ is bounded below we have for any $h\in \mcl$,
\begin{align}
h &=Qh + TRh \label{k}
\end{align}
and 
\begin{align}
\norm h^2 &\geq \norm {Qh}^2 +\norm {TRh}^2 \geq \norm{Qh}^2 +\norm{Rh}^2. \label{l}
\end{align}
Since $Rh \in \mcl \oplus \fcl$ (by \ref{j}), then we have $$Rh= P_\mcl Rh +SRh$$
which implies that $Rh- SRh \in \mcl$. 
Note that since \ref{k} is true for any $h \in \mcl$, therefore if we replace $h$ by $Rh-SRh$ in \ref{k} we get 
\begin{equation}\label{m}
Rh =QRh +TR^2h +SRh.
\end{equation}
Now it is easy to observe that $R(\mcl \oplus \fcl) \subset \mcl \oplus \fcl$ and hence $R^mh \in \mcl \oplus \fcl ,~\forall m \in \N$. Therefore by induction  from $\ref{m} $ we get for any $m\in \N$,
\begin{equation}\label{esmn1}
R^mh= QR^mh +TR^{m+1}h +SR^mh
\end{equation}
and since $T$ is bounded below we have
\begin{equation}\label{esm1}
\norm {R^m h}^2 \geq \norm {QR^m h}^2 + \norm{R^{m+1}h}^2+ \norm {SR^m h} ^2.
\end{equation}
Finally by combining \ref{k} and \ref{esmn1} we have 
\begin{equation*}
h=\sum_{k=0}^{m}T^kQR^kh + T^{m+1}R^{m+1}h +T\sum_{k=1}^{m}T^{k-1}SR^kh ,~~m\in \N
\end{equation*}
and moreover equations \ref{l} and \ref{esm1} yield that
\begin{equation*}
\norm h ^2_\hilh \geq \sum_{k=0}^{\infty}\norm {QR^kh} _\hilh ^2 +\sum_{k=1}^{\infty} \norm {SR^kh} ^2_\hilh.
\end{equation*}
This completes the proof.
\end{proof}

\begin{rmrk}\label{rem}
	Under the same assumtion as in Lemma \ref{lm}, let $\mcl$ be a nearly $T^{-1}$ invariant subspace of $\hilh $ with defect $p$ such that $\mcl \subseteq T\hilh $ and let $\fcl $ be the corresponding $p$ dimensional defect space having an orthonormal basis $\{e_j\}_{j=1}^p$. Then for any $h\in \mcl$ and $m\in \N$ we have 
	\begin{equation}
	h=T^{m+1}R^{m+1}+T\sum_{k=1}^{m}T^{k-1}SR^kh 
	\quad \text{and}\quad 
	\norm {h}^2_\hilh \geq \sum_{k=1}^{\infty}\norm{SR^kh}^2_\hilh.
	\end{equation} 
\end{rmrk}

Next we denote $D(0,a):=\{z\in\C :|z|<a \}$. As an application of the above Approximation Lemma we have the following theorem which is a generalization of Theorem 3.2 in \cite{ER}.

\begin{thm}[Factorization Theorem]\label{thm1}
Assume that the pair $(\hilh ,u)$ satisfies the four conditions (i)-(iv) given above. Let  $\mcl$ be a nearly $M_u^{-1}$ invariant subspace of $\hilh$ with defect $p$ and let $\fcl$ be the corresponding defect space. Let $\{g_i \}_{i\in I}$ be an orthonormal basis of $\mcl \ominus (\mcl \cap M_u\hilh)$ and let $\{e_j\}_{j=1}^p$ be an orthonormal basis of $\fcl$. Moreover, we also assume that 
	\begin{equation} \label{hypo1}
	 \bigcap_{n\in \N}u^n|_{u^{-1}(D(0,\gamma))}\hilh |_{u^{-1}(D(0,\gamma))} =\{0\},
	 \end{equation}
	where $\hilh |_{u^{-1}(D(0,\gamma))}$ consists of the restrictions to $u^{-1}(D(0,\gamma))$ of the functions of $\hilh$.
	Then\\  (i) in the case when $\mcl \nsubseteq M_u\hilh$, for all $h\in \mcl$, there exist $(q_i)_{i\in I}$ and $(h_j)_{j=1}^p$ in $\mathcal{O}(u^{-1}(D(0,\gamma)))$ such that $$h= \sum_{i\in I}g_iq_i +\gamma ^{-1}M_u\sum_{j=1}^{p}e_jh_j$$ on $u^{-1}(D(0,\gamma))$ for all $i\in I$ and $j\in \{1,\ldots p\}$, and also there exist $(c_{ki})_{k\in \N _0} \in \C^\N$ and $(b_{kj})_{k\in \N }\in \C^\N$, where $\N _0=\N\cup \{0\}$ with 
	\begin{align}
	&q_i =\sum_{k=0}^{\infty}c_{ki}\bigg(\dfrac{u}{\gamma}\bigg)^k ,~ h_j=\sum_{k=1}^{\infty}b_{kj}\bigg( \dfrac{u}{\gamma}\bigg)^{k-1}
	\end{align}
	and
	\begin{align}
	&\sum_{i\in I}\sum_{k=0}^{\infty}|c_{ki}|^2 +\sum_{j=1}^{p}\sum_{k=1}^{\infty}|b_{kj}|^2 \leq \norm h_{\hilh}^2.
	\end{align}
	
	(ii) In the case when $\mcl \subseteq M_u\hilh$, then for all $h\in \mcl$ there exists $(h_j)_{j=1}^p$ in $\mathcal{O}(u^{-1}(D(0,\gamma)))$ such that $$h= \gamma ^{-1}M_u\sum_{j=1}^{p}e_jh_j \quad \text{on}~ u^{-1}(D(0,\gamma)) $$ for all $j\in \{1,2,\ldots p\}$ and also there exists $(b_{kj})_{k\in \N }\in \C^\N$ such that
	\begin{align*}
    h_j=\sum_{k=1}^{\infty}b_{kj}\bigg( \dfrac{u}{\gamma}\bigg)^{k-1} \quad \text{and} \quad 
	\sum_{j=1}^{p}\sum_{k=1}^{\infty}|b_{kj}|^2 \leq \norm h^2 .
	\end{align*}
\end{thm}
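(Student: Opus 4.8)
The plan is to run the entire argument through the Approximation Lemma (Lemma \ref{lm}) applied to the rescaled operator $T := \gamma^{-1} M_u$. By the definition of the lower bound $\gamma$ in \ref{lbd}, we have $\norm{h}_\hilh \le \norm{Th}_\hilh$ for every $h \in \hilh$, so $T$ meets the hypothesis of Lemma \ref{lm}; moreover $M_u\hilh = T\hilh$, so $\mcl \ominus (\mcl \cap M_u\hilh)$ and the defect space $\fcl$ are precisely the data appearing there. Fixing $h \in \mcl$, I would expand the two projection terms in the given orthonormal bases: since $QR^k h \in \mcl \ominus (\mcl \cap M_u\hilh)$ and $SR^k h \in \fcl$, set $c_{ki} := \langle R^k h, g_i\rangle$ and $b_{kj} := \langle R^k h, e_j\rangle$, so that $QR^k h = \sum_{i} c_{ki} g_i$ and $SR^k h = \sum_j b_{kj} e_j$. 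Parseval's identity together with the energy estimate \ref{eqlmma2} then yields at once
\[
\sum_{i\in I}\sum_{k=0}^\infty |c_{ki}|^2 + \sum_{j=1}^p \sum_{k=1}^\infty |b_{kj}|^2 \le \norm{h}_\hilh^2,
\]
which is the claimed coefficient bound and, in particular, forces each coefficient sequence into $\ell^2$.

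Next I would rewrite the finite identity \ref{eqlmma1} in multiplicative form. Because $T$ acts as multiplication by $u/\gamma$, one has $T^k g_i = (u/\gamma)^k g_i$ and $T^k e_j = (u/\gamma)^k e_j$, so \ref{eqlmma1} becomes
\[
h = \sum_{k=0}^m (u/\gamma)^k\, QR^k h \;+\; (u/\gamma)^{m+1} R^{m+1}h \;+\; \sum_{k=1}^m (u/\gamma)^k\, SR^k h
\]
as an identity of functions on $\wcl$. Restricting to $u^{-1}(D(0,\gamma)) = \{w : |u(w)| < \gamma\}$, where $|u/\gamma| < 1$, I would let $m \to \infty$. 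On this set the first and third sums converge locally uniformly to $\sum_i g_i q_i$ and to $\gamma^{-1}M_u\sum_j e_j h_j$ respectively, with $q_i = \sum_{k\ge 0} c_{ki}(u/\gamma)^k$ and $h_j = \sum_{k\ge 1} b_{kj}(u/\gamma)^{k-1}$; this is justified by Cauchy--Schwarz, using the $\ell^2$ bound above, the geometric factor $\sum_k |u/\gamma|^{2k} < \infty$, and the Bessel inequality $\sum_i |g_i(w)|^2 \le \norm{K_w}^2$ coming from the continuity of point evaluations (condition (iii)). The same estimates show $q_i, h_j \in \mathcal{O}(u^{-1}(D(0,\gamma)))$.

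The crux is to show that the remainder $(u/\gamma)^{m+1} R^{m+1}h$ vanishes on $u^{-1}(D(0,\gamma))$ as $m \to \infty$, and this is exactly where hypothesis \ref{hypo1} enters. For each fixed $n$ and all $m+1 \ge n$ one has $T^{m+1}R^{m+1}h = T^{n}\bigl(T^{m+1-n}R^{m+1}h\bigr) \in u^{n}|_{u^{-1}(D(0,\gamma))}\,\hilh|_{u^{-1}(D(0,\gamma))}$; since $\norm{R}\le 1$ keeps $\norm{R^{m+1}h}_\hilh \le \norm{h}_\hilh$ bounded while $|u/\gamma|^{m+1}\to 0$, the remainder converges to a function lying in every $u^{n}|_{u^{-1}(D(0,\gamma))}\hilh|_{u^{-1}(D(0,\gamma))}$, hence in the intersection \ref{hypo1}, which is $\{0\}$. (Equivalently, at each fixed $w$ the pointwise bound $|u(w)/\gamma|^{m+1}\,\norm{K_w}\,\norm{h}_\hilh \to 0$ already annihilates the remainder.) Passing to the limit in the displayed identity then gives $h = \sum_i g_i q_i + \gamma^{-1}M_u\sum_j e_j h_j$ on $u^{-1}(D(0,\gamma))$, proving part (i), with the norm estimate furnished by the coefficient bound. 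For part (ii), when $\mcl \subseteq M_u\hilh$ the space $\mcl \ominus (\mcl \cap M_u\hilh)$ is trivial, so the $Q$-term and all the $g_i q_i$ disappear and the identical argument, now based on Remark \ref{rem} rather than the full lemma, yields $h = \gamma^{-1}M_u\sum_j e_j h_j$ with $\sum_{j,k}|b_{kj}|^2 \le \norm{h}^2$. The main obstacle, as indicated, is the simultaneous control of the two limits (in $k$ and in $m$) on $u^{-1}(D(0,\gamma))$, which is precisely what the geometric smallness of $u/\gamma$ there, together with \ref{hypo1}, is designed to supply.
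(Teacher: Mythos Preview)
Your argument is correct and follows essentially the same route as the paper's own proof: you apply the Approximation Lemma to $T=\gamma^{-1}M_u$, expand $QR^kh$ and $SR^kh$ in the given orthonormal bases, read off the $\ell^2$ bound from \eqref{eqlmma2}, control the series via Cauchy--Schwarz together with the reproducing kernel $K_w$, and eliminate the remainder using \eqref{hypo1}. Your parenthetical pointwise estimate $|u(w)/\gamma|^{m+1}\norm{K_w}\norm{h}\to 0$ is in fact the cleanest way to dispose of the remainder (and makes the use of \eqref{hypo1} essentially cosmetic here), but this is a minor refinement rather than a different method.
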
 
\begin{proof}
(i) First we consider $T=\gamma ^{1}M_u$. Then $T$ satisfies the hypothesis of Lemma \ref{lm}. Now we define $R,Q,S$ as in Lemma \ref{lm} and let $h\in \mcl$.  Then we  define a family of sequences $\{(c_{ki})_{k\in \N_0}\}_{i\in I},$ $\{(b_{kj})_{k\in \N}\}_{j=1}^p$ of complex numbers by the following equations 
\begin{align*}
& QR^kh =\sum_{i\in I}c_{ki}g_i,  \quad k\in \N_0 \quad \text{and} \quad 
SR^kh=\sum_{j=1}^{p}b_{kj}e_j \quad k \in \N.
\end{align*} 
Therefore by using \eqref{eqlmma1} and \eqref{eqlmma2} we get
\begin{align*}
h &= \sum_{k=0}^{m}T^kQR^kh +T^{m+1}R^{m+1}h +\sum_{k=1}^{m}T^kSR^kh \\
&= \sum_{k=0}^{m}\sum_{i\in I}c_{ki}T^kg_i +T^{m+1}R^{m+1}h +T\sum_{k=1}^{m}\sum_{j=1}^{p}b_{kj}T^{k-1}e_j,
\end{align*}
and hence
\begin{align}
h &= \sum_{k=0}^{m}\sum_{i\in I}c_{ki}\bigg(\dfrac{u}{\gamma}\bigg)^kg_i +\bigg(\dfrac{u}{\gamma}\bigg)^{m+1}R^{m+1}h +\gamma ^{-1}u\sum_{k=1}^{m}\sum_{j=1}^{p}b_{kj}\bigg(\dfrac{u}{\gamma}\bigg)^{k-1}e_j, \label{o}
\end{align}
and
\begin{align}
 &\sum_{i\in I}\sum_{k=0}^{\infty}|c_{ki}|^2 +\sum_{j=1}^{p}\sum_{k=1}^{\infty}|b_{kj}|^2 \leq \norm h^2, \label{n}
\end{align}
so that for all $i\in I$ and $j\in \{1,2,\ldots ,p \}$, 
\begin{equation*}
\sum_{k=0}^{\infty}|c_{ki}|^2  <\infty \quad \text{and} \quad  \sum_{k=1}^{\infty}|b_{kj}|^2 < \infty .
\end{equation*}
Therefore it follows that for all $i\in I$, the series $\sum\limits_{k=0}^{\infty}c_{ki}\bigg(\dfrac{u}{\gamma}\bigg)^k$ converges uniformly on compact subsets of $u^{-1}(D(0,\gamma))$, so that its sum, which we  denote by $q_i$, belongs to  $\mathcal{O}(u^{-1}(D(0,\gamma)))$. Similarly the series $\sum\limits_{k=1}^{\infty}b_{kj}\bigg( \dfrac{u}{\gamma}\bigg)^{k-1}$ also converges uniformly on compact subsets of $u^{-1}(D(0,\gamma))$ and hence the sum of the series denoted by $h_j$ also belongs to $\mathcal{O}(u^{-1}(D(0,\gamma)))$. Let $w\in u^{-1}(D(0,\gamma))$, then by using Cauchy-Schwarz inequality and \ref{n} we obtain
\begin{align*}
\sum _{i\in I}|(g_iq_i)(w)| &\leq \bigg(\sum_{i\in I}|g_i(w)|^2\bigg)^{\frac{1}{2}}\bigg(\sum_{i\in I}|q_i(w)|^2\bigg)^{\frac{1}{2}}\\
& \leq\bigg(\sum_{i\in I}|\langle g_i,k_w\rangle|^2\bigg)^{\frac{1}{2}}\bigg(\sum_{i\in I}(\sum_{k=0}^{\infty}|c_{ki}|^2)(\sum_{k=0}^{\infty}\dfrac{|u(w)|^{2k}}{\gamma ^{2k}})\bigg)^{\frac{1}{2}} \leq\norm{Qk_w} _\hilh \norm{h}_\hilh\dfrac{1}{\sqrt{1-\dfrac{|u(w)|^2}{\gamma ^2}}},
\end{align*}
and
\begin{align*}
\sum _{j=1}^{p}|(e_jh_j)(w)| &\leq \norm{Sk_w} _\hilh \norm{h}_\hilh\dfrac{1}{\sqrt{1-\dfrac{|u(w)|^2}{\gamma ^2}}},
\end{align*}
and hence that both the series $\sum\limits _{i\in I}g_iq_i$ and $\sum\limits _{j=1}^{p}e_jh_j$ converges at each point of $u^{-1}(D(0,\gamma))$. Now from equation \ref{o} we obtain $$(h-\sum_{i\in I}g_iq_i-\sum_{j=1}^{p}e_jh_j)|_{u^{-1}(D(0,\gamma))} \in \bigcap_{m\in \N}u^m|_{u^{-1}(D(0,\gamma))}\hilh |_{u^{-1}(D(0,\gamma))},$$
which along with the hypothesis \eqref{hypo1} implies that 
\begin{align*}
h &= \sum_{i\in I}g_iq_i +\gamma ^{-1}M_u\sum_{j=1}^{p}e_jh_j ~\text{on}~ u^{-1}(D(0,\gamma))\quad \text{and} \quad 
\sum_{i\in I}\sum_{k=0}^{\infty}|c_{ki}|^2 +\sum_{j=1}^{p}\sum_{k=1}^{\infty}|b_{kj}|^2 \leq \norm h^2.
\end{align*}
(ii) Again we consider $T=\gamma_1^{-1}M_u$. Therefore by using Remark $\ref{rem}$ and proceeding as in case $(i)$ we obtain
\begin{align*}
h &= \gamma ^{-1}M_u\sum_{j=1}^{p}e_jh_j ~on~ u^{-1}(D(0,\gamma)) \quad \text{and} \quad 
 &\sum_{j=1}^{p}\sum_{k=1}^{\infty}|b_{kj}|^2 \leq \norm h^2.
\end{align*}
 \end{proof}

Now we are in a position to describe the nearly $T_B^{-1}$ invariant subspaces with defect $p$ corresponding to a finite Blaschke $B$ in Dirichlet type spaces $\dcl_\alpha$ for $\alpha \in [-1,1]$ by applying similar type of mechanism done by Liang and Partington in \cite{YP} . Now on wards we assume that $B$ is Blaschke product of degree m and therefore for any non trivial nearly $T_B^{-1}$ invariant subspace $\mcl$ in $\dcl_\alpha$ with defect $p$ and $\mcl \nsubseteq T_B\dcl_\alpha$ we have $$	1\leq r:=dim(\mcl \ominus (\mcl \cap T_B\dcl_\alpha ))\leq m $$
which follows by similar argument as in Lemma \ref{a}. In the sequel, we now endow the space $\dcl_\alpha$ with two different equivalent norms according to the cases $\alpha \in [-1,0)$ and $\alpha \in [0,1]$ and hence we divide the analysis into two subsections.

\subsection{\boldmath $\alpha \in [-1,0)$} 
Note that we need to endow the space $\dcl_\alpha$ with a norm in such a way so that we can a get a nice lower bound of the operator $T_B$. Keeping this information in our mind we endow the space $\dcl_\alpha$ for  $\alpha \in [-1,0)$ with the modified 
equivalent norm denoted by $\|\cdot \|_1$ as follows: for any $f=\sum_{n=0}^{\infty}f_n B^n$ with $f_n \in \mathcal{K}_B$,
\begin{equation}
\norm f_1^2 :=\sum_{n=0}^{G-1}G^\alpha \norm {f_n}^2_{\hdcc}  +\sum_{n=G}^{\infty}(n+1)^\alpha\norm {f_n}^2_{\hdcc},  
\end{equation}
where $G$ is a fixed and sufficiently large positive number to be specified below. It is easy to observe that the lower bound of $T_B$ defined in \ref{lbd} is 
\begin{equation}\label{lbd1}
\gamma _1:=\Bigg(1-\dfrac{1}{G+1} \Bigg)^{-\alpha /2}.
\end{equation}
Thus from the definition of lower bound it follows that for any $f\in \dcl_\alpha$,
\begin{equation*}
\norm {T_Bf}_1^2 =\norm {Bf}_1^2 \geq \gamma _1^2\norm f _1^2
\end{equation*}
and hence the operator $T:=\gamma_1^{-1}T_B:\dcl_\alpha \to \dcl_\alpha $ satisfies
\begin{equation*}
\norm {Tf}_1^2 =\norm{\gamma_1^{-1}T_Bf}_1^2 \geq \norm f _1^2 \quad \text{for any} \quad f\in \dcl_\alpha.
\end{equation*}
Note that the pair $(\dcl_\alpha ,T_B)$ also satisfies conditions (i)-(iv) with lower bound $\gamma_1$ given in \ref{lbd1}. Now we choose $G$ large enough so that $\gamma_1$ satisfies $B^{-1}(D(0,\gamma_1))\supset s\D$ with $s\D$ a disc containing all the zeros of $B$ which ensures that 
\begin{equation}\label{inq}
\norm {\gamma_1 ^{-1}B}_{H^\infty(s\D)} <1.
\end{equation}
Moreover, the operator $T:=\gamma_1 ^{-1}T_B$ satisfies all the assumptions in Lemma \ref{lm} together with the fact that  
$$\bigcap_{m\in \N}B^m\dcl_\alpha |_{s\D}=\bigcap_{m\in \N}T^m\dcl_\alpha |_{s\D}=\{0\}.$$
Combining the above facts together with Theorem \ref{thm1} implies the following lemma, providing a generalization of Lemma 3.6 in \cite{YP}. 
\begin{lma}\label{lma1}
	Let $\mcl$ be a non trivial nearly $T_B^{-1}$ invariant subspace of $\dcl_\alpha$ with defect $p$ for $\alpha \in [-1,0)$ and let $\fcl$ be the corresponding $p$ dimensional defect space. Let $\{f_i \}_{i=1}^r$ and $\{e_j\}_{j=1}^p$ be an orthonormal basis of $\mcl \ominus (\mcl\cap T_B\dcl_\alpha )$ and $\fcl$ respectively. Then for all $f\in \mcl $, there exist $\{q_i \}_{i=1}^r$ and $\{h_j\}_{j=1}^p$ in $\mathcal{O}(s\D)$ such that 
	\begin{equation}\label{fsb1}
	f=\sum_{i=1}^{r}f_iq_i +\gamma_1^{-1}T_B\sum_{j=1}^{p}e_jh_j \quad \text{on}~ s\D,
	\end{equation}
	for all $i\in \{1,2,\ldots ,r \}$ and $j\in \{1,2,\ldots ,p \}$, and also there exist $(a_{ki})_{k\in \N _0} \in \C^\N$ and $(b_{kj})_{k\in \N }\in \C^\N$  with 
	\begin{align}
	&q_i =\sum_{k=0}^{\infty}a_{ki}\bigg(\gamma_1^{-1}B\bigg)^k \quad \text{on}~s\D, \quad  h_j=\sum_{k=1}^{\infty}b_{kj}\bigg( \gamma_1^{-1}B\bigg)^{k-1} \quad \text{on}~s\D \label{eq}
	\end{align}
	and 
	\begin{align}
	&\sum_{i=1}^{r}\sum_{k=0}^{\infty}|a_{ki}|^2 +\sum_{j=1}^{p}\sum_{k=1}^{\infty}|b_{kj}|^2 \leq \norm f_{\dcl_\alpha}^2 \label{ineq1}.
	\end{align}
	\end{lma}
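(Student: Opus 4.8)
The plan is to read off Lemma \ref{lma1} as a direct specialization of the Factorization Theorem (Theorem \ref{thm1}) to the pair $(\dcl_\alpha, B)$ equipped with the modified norm $\norm{\cdot}_1$. Concretely, I would take $\hilh = \dcl_\alpha$ (with inner product induced by $\norm{\cdot}_1$), $\wcl = \D$, $u = B$, $M_u = T_B$, and the lower bound $\gamma = \gamma_1$ supplied by \eqref{lbd1}. Under these identifications the domain $u^{-1}(D(0,\gamma))$ appearing in Theorem \ref{thm1} becomes $B^{-1}(D(0,\gamma_1))$, which by the choice of $G$ contains $s\D$; the index set is $I = \{1,\dots,r\}$, the basis $\{g_i\}$ is $\{f_i\}_{i=1}^r$, the defect basis $\{e_j\}_{j=1}^p$ is carried over verbatim, and the coefficients are relabelled $c_{ki}=a_{ki}$. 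Since $u/\gamma = \gamma_1^{-1}B$, the power-series expansions produced by Theorem \ref{thm1} are exactly those in \eqref{eq}. Because the preceding discussion fixes $\mcl \nsubseteq T_B\dcl_\alpha$ (so that $r\ge 1$), it is case (i) of Theorem \ref{thm1} that applies.

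Next I would verify the hypotheses demanded by Theorem \ref{thm1}. The four conditions (i)--(iv) for $(\dcl_\alpha, B)$ have already been recorded above: $\dcl_\alpha$ is a reproducing kernel Hilbert space of functions analytic on $\D$, $B$ is a multiplier, point evaluations are bounded, and $\norm{\cdot}_1$ renders $T_B$ bounded below with lower bound $\gamma_1$, so that $T := \gamma_1^{-1}T_B$ satisfies $\norm{f}_1 \le \norm{Tf}_1$ and meets the hypothesis of Lemma \ref{lm}. It then remains only to confirm the nondegeneracy hypothesis \eqref{hypo1}, which in the present setting reads $\bigcap_{m\in\N} B^m\dcl_\alpha|_{s\D} = \{0\}$.

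I expect this last verification to be the heart of the matter. The mechanism is that $T = \gamma_1^{-1}T_B$ is bounded below by $1$: if $f|_{s\D}$ lies in $B^m\dcl_\alpha|_{s\D}$ for every $m$, write $f = B^m g_m$ with $g_m\in\dcl_\alpha$, so that $T^m g_m = \gamma_1^{-m} f$ yields $\norm{g_m}_1 \le \gamma_1^{-m}\norm{f}_1$. Evaluating at a fixed $z\in s\D$ and invoking continuity of point evaluation gives $|f(z)| = |B(z)|^m|g_m(z)| \le C_z\,(|B(z)|/\gamma_1)^m\,\norm{f}_1$, and since $\norm{\gamma_1^{-1}B}_{H^\infty(s\D)} < 1$ by \eqref{inq}, letting $m\to\infty$ forces $f(z)=0$ throughout $s\D$. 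This is precisely the fact already asserted just before the lemma, so in the write-up it may simply be cited.

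With the hypotheses in place, applying Theorem \ref{thm1}(i) to $h = f\in\mcl$ produces $q_i,h_j\in\mathcal{O}(B^{-1}(D(0,\gamma_1)))$, the factorization $f = \sum_{i=1}^r f_iq_i + \gamma_1^{-1}T_B\sum_{j=1}^p e_jh_j$ on $B^{-1}(D(0,\gamma_1))$, the expansions \eqref{eq}, and the estimate $\sum_{i=1}^r\sum_{k=0}^\infty|a_{ki}|^2 + \sum_{j=1}^p\sum_{k=1}^\infty|b_{kj}|^2 \le \norm{f}_1^2$. Restricting every identity to $s\D\subset B^{-1}(D(0,\gamma_1))$ then gives \eqref{fsb1} and \eqref{eq} as stated. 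To obtain \eqref{ineq1} in the claimed form I would finally compare norms: for $\alpha\in[-1,0)$ the map $x\mapsto x^\alpha$ is decreasing, so $G^\alpha\le(n+1)^\alpha$ for $0\le n\le G-1$, whence $\norm{f}_1 \le \norm{f}_{\dcl_\alpha}$ and the bound upgrades to $\le\norm{f}_{\dcl_\alpha}^2$, completing the proof.
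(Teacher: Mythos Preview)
Your proposal is correct and follows essentially the same approach as the paper: the paper's proof of this lemma is simply the one-line remark that the previously verified facts (conditions (i)--(iv) for the pair $(\dcl_\alpha,B)$ with the norm $\norm{\cdot}_1$, the lower bound $\gamma_1$, the inclusion $s\D\subset B^{-1}(D(0,\gamma_1))$, and the vanishing $\bigcap_{m\in\N}B^m\dcl_\alpha|_{s\D}=\{0\}$) together with Theorem~\ref{thm1} yield the conclusion. Your write-up spells out exactly this specialization, including an explicit justification of the intersection condition and the final norm comparison $\norm{f}_1\le\norm{f}_{\dcl_\alpha}$, which the paper leaves implicit.
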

\begin{rmrk}\label{rem1}
	If the subspace $\mcl \subseteq T_B\dcl_\alpha$, then using the same notation as in Lemma \ref{lma1}, for all $f\in \mcl $ there exists $\{h_j\}_{j=1}^p$ in $\mathcal{O}(s\D)$ such that 
	\begin{equation*}
	f=\gamma_1^{-1}T_B\sum_{j=1}^{p}e_jh_j ~on~ s\D,
	\end{equation*}
	 and also there exists $(b_{kj})_{k\in \N }\in \C^\N$  with 
	\begin{align*}
	 h_j &=\sum_{k=1}^{\infty}b_{kj}\bigg( \gamma_1^{-1}B\bigg)^{k-1} \quad \text{and} \quad
     \sum_{j=1}^{p}\sum_{k=1}^{\infty}|b_{kj}|^2 \leq \norm f_{\dcl_\alpha}^2 .
	\end{align*}
\end{rmrk}
Here our main aim is to describe the nearly $T_B^{-1}$ invariant subspaces of $\dcl_\alpha$ with finite defect for $\alpha\in [-1,0)$ in terms of $T_{B^{-1}}$ invariant subspaces of $H^2_{\C^{r+p}}(s\D)$. In order to get a connection with invariant subspaces of $H^2_{\C^{r+p}}(\D)$ we introduce an unitary mapping $U_s :H^2_{\C^{r+p}}(s\D)\to H^2_{\C^{r+p}}(\D)$ by $$(U_sf)(z)=f(sz).$$
If we denote $T_s^* := U_sT_{B^{-1}}U_s^*$, then we have the following commutative diagram \ref{diag2}
\begin{align}\label{diag2}
\begin{matrix}
H^2_{\C^{r+p}}(s\D)\xrightarrow[\hspace*{3cm}]{T_B^{-1}}H^2_{\C^{r+p}}(s\D)\\
U_s\Bigg\downarrow\hspace*{3.5cm}\Bigg\downarrow U_s\\
H^2_{\C^{r+p}}(\D)\xrightarrow[\hspace*{3cm}]{T_s^*}H^2_{\C^{r+p}}(\D).
\end{matrix}
\end{align}
Since the disc $s\D$ contains all the zeros of $B$, then the symbol $B^{-1}$ lies in $L^\infty(s\mathbb{T})$ and therefore by using the fact $B^{-1}(sz)=\overline{B(s^{-1}z)}$ on $\mathbb{T}$ we conclude 
\begin{equation}\label{par}
(T_s^* f)(z)= T_{\overline{B(s^{-1}z)}}f(z).
\end{equation}
For more details about \eqref{par} (see (3.18), section 3 in  \cite{YP}).
Now we state our main theorem in this subsection concerning nearly $T_B^{-1}$ invariant subspaces with defect $p$ in $\dcl_\alpha$ spaces with $\alpha \in [-1,0)$ based on above notations which gives a generalization of Theorem 3.7 in \cite{YP}.
\begin{thm}
Let $\mcl$ be a nearly $T_B^{-1}$ invariant subspace of $\dcl_\alpha$ with finite defect $p$  for $\alpha \in [-1,0)$ and let $\fcl$ be the corresponding $p$ dimensional defect space.
 Let $E_0:=[e_1,e_2,\ldots ,e_p]$, where $\{e_j\}_{j=1}^p$ is an orthonormal basis of $\fcl$ using norm $\norm ._1$. Then 
	
	(i) in the case when $\mcl \nsubseteq T_B\dcl_\alpha$, if $F_0:=[f_1,f_2,\ldots ,f_r]$ is a matrix containing an orthonormal basis $\{f_i\}_{i=1}^r$ of $\mcl \ominus(\mcl \cap T_B\dcl_\alpha)$, then there exists a linear subspace $\ncl \subset H^2_{\C^{r+p}}(s\D)$ such that 
	$$ \mcl= \Bigg\{f\in \dcl_\alpha :f=F_0 q +\gamma_1^{-1}T_BE_0h \quad \text{on} ~s\D: (q,h)\in \ncl \Bigg\} \quad \text{on} ~s\D,$$ together with $$\bigg(1-\norm{\gamma_1^{-1}B}_{H^\infty(s\D)}^2 \bigg)^{1/2}\bigg(\norm {q}^2_{H^2_{\C^r}(s\D)}+\norm {h}^2_{H^2_{\C^p}(s\D)} \bigg)^{1/2} \leq \norm {f}_{\dcl_\alpha}.$$
	Moreover, $\ncl$ is invariant under $T_B^{-1}$ and hence $U_s(\ncl)$ is invariant under $T_s^*= U_sT_{B^{-1}}U_s^*$ in $H^2_{\C^{r+p}}(\D)$.
	
	(ii) In the case when $\mcl \subset T_B\dcl_\alpha$, then there exists a linear subspace $\ncl \subset H^2_{\C^{p}}(s\D)$ such that 
	$$ \mcl= \Bigg\{f\in \dcl_\alpha :f= \gamma_1^{-1}T_BE_0h : h\in \ncl \Bigg\} \quad \text{on} ~s\D,$$ together with $$\bigg(1-\norm{\gamma_1^{-1}B}_{H^\infty(s\D)}^2 \bigg)^{1/2}\norm {h}_{H^2_{\C^p}(s\D)}  \leq \norm {f}_{\dcl_\alpha}.$$
	Moreover, $\ncl$ is invariant under $T_B^{-1}$ and hence $U_s(\ncl)$ is invariant under $T_s^*= U_sT_{B^{-1}}U_s^*$ in $H^2_{\C^{p}}(\D)$  (Note that here $U_s :H^2_{\C^{p}}(s\D) \to H^2_{\C^{p}}(\D)$).
	   
\end{thm}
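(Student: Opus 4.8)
The goal is to convert the pointwise factorization on $s\D$ provided by Lemma~\ref{lma1} (and Remark~\ref{rem1} for case (ii)) into a structural description of $\mcl$ in terms of an honest invariant subspace $\ncl$ of a vector-valued Hardy space, and then to verify the claimed norm inequality and the $T_B^{-1}$-invariance of $\ncl$. First I would treat case (i). Lemma~\ref{lma1} tells us that every $f\in\mcl$ factors on $s\D$ as $f = F_0 q + \gamma_1^{-1}T_B E_0 h$, where the entries of $q=(q_i)_{i=1}^r$ and $h=(h_j)_{j=1}^p$ are the power series $q_i=\sum_{k\ge0} a_{ki}(\gamma_1^{-1}B)^k$ and $h_j=\sum_{k\ge1} b_{kj}(\gamma_1^{-1}B)^{k-1}$ with $\ell^2$ coefficient control $\sum_{i,k}|a_{ki}|^2+\sum_{j,k}|b_{kj}|^2\le\|f\|_{\dcl_\alpha}^2$. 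The natural move is to define $\ncl$ to be the set of all pairs $(q,h)\in H^2_{\C^r}(s\D)\times H^2_{\C^p}(s\D)=H^2_{\C^{r+p}}(s\D)$ arising this way as $f$ ranges over $\mcl$; one must first check that each such $(q,h)$ genuinely lies in $H^2_{\C^{r+p}}(s\D)$.

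\textbf{The norm estimate.} The key computation is to bound $\|q\|^2_{H^2_{\C^r}(s\D)}+\|h\|^2_{H^2_{\C^p}(s\D)}$ from above by the $\ell^2$ sum of the coefficients, so that \eqref{ineq1} can be applied. Since $q_i$ and $h_j$ are power series in the single function $w:=\gamma_1^{-1}B$, and $\|w\|_{H^\infty(s\D)}=\|\gamma_1^{-1}B\|_{H^\infty(s\D)}<1$ by \eqref{inq}, I would expand, use $\|B^k\|_{H^2(s\D)}\le \|B\|_{H^\infty(s\D)}^{\,k}\cdot(\text{const})$ together with orthogonality where available, and estimate via a geometric series $\sum_k \|\gamma_1^{-1}B\|_{H^\infty(s\D)}^{2k}=\bigl(1-\|\gamma_1^{-1}B\|_{H^\infty(s\D)}^2\bigr)^{-1}$. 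This yields
\[
\bigl(1-\|\gamma_1^{-1}B\|_{H^\infty(s\D)}^2\bigr)\Bigl(\|q\|^2_{H^2_{\C^r}(s\D)}+\|h\|^2_{H^2_{\C^p}(s\D)}\Bigr)
\le \sum_{i,k}|a_{ki}|^2+\sum_{j,k}|b_{kj}|^2\le\|f\|_{\dcl_\alpha}^2,
\]
which on taking square roots is exactly the asserted inequality. This interchange of the coefficient $\ell^2$-norm with the genuine $H^2(s\D)$-norm, controlled by the strict contraction $\|\gamma_1^{-1}B\|_{H^\infty(s\D)}<1$, is the technical heart of the argument and the step I expect to require the most care, since the powers of $B$ are not orthogonal in $H^2(s\D)$ and one must absorb the resulting cross-terms into the geometric factor.

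\textbf{Invariance.} Finally I would show $\ncl$ is invariant under $T_B^{-1}$. The natural approach is to use the near-invariance of $\mcl$ directly: if $(q,h)\in\ncl$ comes from $f\in\mcl$, one exhibits $T_B^{-1}(q,h)$ as the pair coming from a suitable element of $\mcl$, using the factorization in Lemma~\ref{lma1} together with the shift structure of the coefficient sequences $(a_{ki})_k$ and $(b_{kj})_k$ under multiplication by $\gamma_1^{-1}B$; dividing the power series $q_i,h_j$ by $w=\gamma_1^{-1}B$ corresponds to a backward shift on these coefficients, and near $T_B^{-1}$-invariance of $\mcl$ guarantees the divided element still factors through $\mcl\oplus\fcl$. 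Once $\ncl$ is invariant under $T_B^{-1}$ on $H^2_{\C^{r+p}}(s\D)$, the commutative diagram \eqref{diag2} transports this to $T_s^*$-invariance of $U_s(\ncl)$ in $H^2_{\C^{r+p}}(\D)$ via the intertwining $T_s^* = U_s T_{B^{-1}}U_s^*$ and relation \eqref{par}. For part (ii), where $\mcl\subseteq T_B\dcl_\alpha$, I would run the identical argument but starting from Remark~\ref{rem1}: there is no $F_0 q$ term, so only the $h$-component survives, $\ncl\subset H^2_{\C^p}(s\D)$, and both the norm inequality and the invariance follow by the same estimates restricted to the $h$-part.
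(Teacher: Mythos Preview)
Your proposal is correct and follows essentially the same route as the paper. Two small remarks: for the norm estimate you need not worry about orthogonality or cross-terms at all---the paper simply applies the triangle inequality $\|q_i\|_{H^2(s\D)}\le\sum_k|a_{ki}|\,\|\gamma_1^{-1}B\|_{H^\infty(s\D)}^{k}$ followed by Cauchy--Schwarz on the single sum, which already gives the geometric factor; and for the invariance step the paper makes your ``divide and use near-invariance'' idea precise by invoking the operators $R,Q,S$ from the Approximation Lemma (equation \eqref{eqlmma1} with $m=0$), writing $f=Qf+\gamma_1^{-1}T_B Rf$, identifying $\gamma_1^{-1}Rf=F_0(T_{B^{-1}}q)+\gamma_1^{-1}E_0h$, and then using $Rf\in\mcl\oplus\fcl$ to peel off the $\fcl$-component and land back in $\mcl$.
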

\begin{proof}
(i) For $f\in \mcl\subset \dcl_\alpha$ with $\alpha \in [-1,0)$, the equation \eqref{fsb1} in the above Lemma \ref{lma1} implies
\begin{align}\label{mainthmeq1}
f &=\sum_{i=1}^{r}f_iq_i +\gamma_1^{-1}T_B\sum_{j=1}^{p}e_jh_j 
=F_0 q +\gamma_1^{-1}T_BE_0h, \quad \text{on} ~s\D
\end{align}
where $q=[q_1,q_2,\ldots ,q_r]^t$ and $h=[h_1,h_2,\ldots ,h_p]^t$.	
Using the facts \ref{inq} and \ref{eq} we  obtain the following for all $i\in \{1,2,\ldots ,r\}$ and $j\in\{1,2,\ldots ,p\}$,
\begin{align*}
\norm {q_i}_{H^2(s\D)}&=\norm {\sum_{k=0}^{\infty}a_{ki}(\gamma_1^{-1}B)^k}_{H^2(s\D)}
\leq \sum_{k=0}^{\infty}|a_{ki}|\norm {\gamma_1^{-1}B}^k_{H^\infty(s\D)}\\
&\leq \bigg(\sum_{k=0}^{\infty}\norm{\gamma_1^{-1}B}^{2k}_{H^\infty(s\D)}\bigg)^{1/2} \bigg(\sum_{k=0}^{\infty}|a_{ki}|^2\bigg)^{1/2}
= \bigg(1-\norm{\gamma_1^{-1}B}^{2}_{H^\infty(s\D)}\bigg)^{-1/2} \bigg(\sum_{k=0}^{\infty}|a_{ki}|^2\bigg)^{1/2},
\end{align*}
and
\begin{align*}
\norm {h_j}_{H^2(s\D)}&\leq \bigg(1-\norm{\gamma_1^{-1}B}^{2}_{H^\infty(s\D)}\bigg)^{-1/2} \bigg(\sum_{k=1}^{\infty}|b_{kj}|^2\bigg)^{1/2}.
\end{align*}
Therefore the above estimates along with  the inequality in \ref{ineq1} yields 
\begin{align*}
\norm{q} ^2_{H^2_{\C^r}(s\D)}&=\sum_{i=1}^{r}\norm{q_i}^2_{H^2(s\D)}
\leq \bigg(1-\norm{\gamma_1^{-1}B}^{2}_{H^\infty(s\D)}\bigg)^{-1} \bigg(\sum_{i=1}^{r}\sum_{k=0}^{\infty}|a_{ki}|^2\bigg)\\
&\leq \bigg(1-\norm{\gamma_1^{-1}B}^{2}_{H^\infty(s\D)}\bigg)^{-1}\norm{f}_{\dcl_\alpha}^2 < +\infty,
\end{align*}
and
\begin{align*}
\norm{h} ^2_{H^2_{\C^p}(s\D)}&\leq \bigg(1-\norm{\gamma_1^{-1}B}^{2}_{H^\infty(s\D)}\bigg)^{-1}\norm{f}_{\dcl_\alpha}^2 < +\infty.
\end{align*}
Thus the above implies
\begin{align*}
q&=\sum_{k=0}^{\infty}A_k(\gamma_1^{-1}B)^k \in H^2_{\C^r}(s\D) \quad \text{where}~ A_k=[a_{k1},a_{k2},\ldots ,a_{kr}]^t ,
\end{align*}
and
\begin{align*}
 h&=\sum_{k=1}^{\infty}B_k(\gamma_1^{-1}B)^{k-1} \in H^2_{\C^p}(s\D) \quad \text{where}~ B_k=[b_{k1},b_{k2},\ldots ,b_{kp}]^t.
 \end{align*}
 Moreover, the equation \ref{ineq1} implies for all $f\in \mcl$, 
 \begin{align}
 \norm{q} ^2_{H^2_{\C^r}(s\D)}+\norm{h} ^2_{H^2_{\C^p}(s\D)}&\leq \bigg(1-\norm{\gamma_1^{-1}B}^{2}_{H^\infty(s\D)}\bigg)^{-1}\norm{f}_{\dcl_\alpha}^2. 
 \end{align}
 Now we define a linear subspace as follows: $$\ncl :=\Bigg\{(q,h)\in  H^2_{\C^{r}}(s\D)\times H^2_{\C^{p}}(s\D) :\exists f\in \mcl, ~f=F_0 q +\gamma_1^{-1}T_BE_0h \quad \text{on}~ s\D   \Bigg\} ,$$
 satisfying for any $f\in \mcl $, $\exists (q,h)\in \ncl$ such that $f=F_0 q +\gamma_1^{-1}T_BE_0h$ on $s\D$. Next we show that $\ncl$ is invariant under $T_{B^{-1}}$. By considering $T=\gamma_1^{-1}T_B$ in Lemma \ref{lm}, the equation  \eqref{eqlmma1} with $m=0$ implies
 \begin{align*}
 f&=Qf+TRf=Qf+\gamma_1^{-1}T_BRf.
 \end{align*}
Moreover, on $s\D$, the above equation together with \eqref{mainthmeq1} yields that
\begin{align*}
F_0 q +\gamma_1^{-1}T_BE_0h &=Q(F_0 q +\gamma_1^{-1}T_BE_0h)+\gamma_1^{-1}T_BR(F_0 q +\gamma_1^{-1}T_BE_0h)\\
&=F_0A_0 +\gamma_1^{-1}BR(F_0 q +\gamma_1^{-1}T_BE_0h),
\end{align*}
which further satisfies 
\begin{align*}
F_0 (q-A_0)+\gamma_1^{-1}T_BE_0h &=\gamma_1^{-1}BR(F_0 q +\gamma_1^{-1}T_BE_0h).
\end{align*}
Next by using the fact  $T_B$ is injective, we conclude from the above that
\begin{align}\label{eqmth1}
\gamma_1^{-1}R(F_0 q +\gamma_1^{-1}T_BE_0h) &= F_0 (\sum_{k=1}^{\infty}A_k\gamma_1^{-k}B^{k-1}) +\gamma_1^{-1}E_0h=F_0(T_{B^{-1}}q)+\gamma_1^{-1}E_0h.
\end{align}
Moreover, by using the fact that $R(F_0 q +\gamma_1^{-1}T_BE_0h)\in \mcl \oplus \fcl$ we obtain
\begin{align}\label{eqmth2}
R(F_0 q +\gamma_1^{-1}T_BE_0h) &= P^\mcl R(F_0 q +\gamma_1^{-1}T_BE_0h) +E_0 B_1.
\end{align} 
Thus by combining equations \eqref{eqmth1} and \eqref{eqmth2} we get 

\begin{align*}
\gamma_1 ^{-1}P_\mcl R(F_0 q +\gamma_1^{-1}T_BE_0h) &=F_0(T_{B^{-1}}q) +\gamma_1 ^{-1}E_0(\sum_{k=2}^{\infty}B_k(\gamma_1^{-1}B)^{k-1})\\
&=F_0(T_{B^{-1}}q)+\gamma_1 ^{-1}T_BE_0(T_{B^{-1}}h).
\end{align*}
Note that $\gamma_1 ^{-1}P^\mcl R(F_0 q +\gamma_1^{-1}T_BE_0h) \in \mcl$ tand hence from the definition of $\ncl$ we conclude $(T_{B^{-1}}q,T_{B^{-1}}h)\in \ncl$. Thus $\ncl$ is $T_{B^{-1}}$ invariant in $H^2_{\C^{r+p}}(s\D)$. Finally, by using the diagram \ref{diag2} we have $T_s^*(U_s(\ncl))\subset U_s(\ncl) $, that is $U_s(\ncl)$ is invariant under $T_s^*$. 

(ii) If $\mcl \subset T_B\dcl_\alpha$, then by using Remark \ref{rem1} and proceeding as in case $(i)$ we  obtain a linear subspace $\ncl \subset H^2_{\C^{p}}(s\D)$ such that 
$$ \mcl= \Bigg\{f\in \dcl_\alpha :f= \gamma_1^{-1}T_BE_0h : h\in \ncl \Bigg\} \quad \text{on}~s\D,$$ together with $$\bigg(1-\norm{\gamma_1^{-1}B}_{H^\infty(s\D)}^2 \bigg)^{1/2}\norm {h}_{H^2_{\C^p}(s\D)}  \leq \norm {f}_{\dcl_\alpha}.$$
Moreover, $\ncl$ is invariant under $T_B^{-1}$ and $U_s(\ncl)$ is invariant under $T_s= U_sT_{B^{-1}}U_s^*$ in $H^2_{\C^{p}}(\D)$. (Note that here $U_s :H^2_{\C^{p}}(s\D) \to H^2_{\C^{p}}(\D)$).  This completes the proof. 
\end{proof}

\subsection{\boldmath$\alpha \in [0,1]$}: Here we consider $\dcl_\alpha$ spaces with $\alpha \in [0,1]$ and $B$ is a finite Blaschke product of degree $m$. We now endow $\dcl_\alpha$ with the following equivalent norm denoted by $\norm ._2$ and is defined by 
\begin{equation}
\norm {f}_2^2:=\sum_{n=0}^{\infty}(n+1)^\alpha\norm{g_n}^2_{\hdcc}
\end{equation}
for any $f=\sum\limits_{n=0}^{\infty}g_nB^n$ with $g_n \in \mathcal{K}_B$ (see Theorem \ref{th1}). Therefore we have,
\begin{align*}
\norm {T_Bf}_2^2=\norm {Bf}_2^2=\sum_{n=0}^{\infty}(n+2)^\alpha \norm {g_n}^2_{\hdcc}\geq \norm{f}_2^2
\end{align*}
which implies that the operator $T_B:(\dcl_\alpha,\norm ._2)\to (\dcl_\alpha ,\norm ._2) $ is lower bounded and the lower bound  \ref{lbd} of  $T_B$ relative to the norm $\norm ._2$ is $\gamma_2 :=1$. Moreover, the pair $(\dcl_\alpha ,B)$ also satisfies the conditions (i)-(iv). Furthermore it is easy to check that $B^{-1}(D(0,1))=B^{-1}(\D)=\D$ and $\bigcap_{m\in \N}B^m\dcl_\alpha =\{0\}$ on $\D$. These facts along with Theorem \ref{thm1} (with $\hilh =\dcl_\alpha, u=B, \gamma =\gamma_2 =1$ and $I=\{1,2,\ldots ,r \}$) gives the following lemma which is a generalization of Lemma 3.3. in \cite{YP}. 
\begin{lma}\label{lma2}
		Let $\mcl$ be a non trivial nearly $T_B^{-1}$ invariant subspace of $\dcl_\alpha$ for $\alpha \in [0,1]$ such that $\mcl \nsubseteq T_B\dcl_\alpha$ and let $\{f_i \}_{i=1}^r$  and $\{e_j\}_{j=1}^p$ be an orthonormal basis of $\mcl \ominus (\mcl\cap T_B\dcl_\alpha )$  and the defect space $\fcl$ respectively. Then for any $f\in \mcl $, there exist $\{q_i \}_{i=1}^r$ and $\{h_j\}_{j=1}^p$ in $\mathcal{O}(\D)$ such that 
	\begin{equation*}
	f=\sum_{i=1}^{r}f_iq_i +T_B\sum_{j=1}^{p}e_jh_j 
	\end{equation*}
	for any $i\in \{1,2,\ldots ,r \}$; $j\in \{1,2,\ldots ,p \}$ and also there exist $(c_{ki})_{k\in \N _0} \in \C^\N$ and $(d_{kj})_{k\in \N }\in \C^\N$  with 
	\begin{align}
	&q_i =\sum_{k=0}^{\infty}c_{ki} B^k, h_j=\sum_{k=1}^{\infty}d_{kj} B^{k-1} \label{eq2}\end{align}
	and 
	\begin{align}
	&\sum_{i=1}^{r}\sum_{k=0}^{\infty}|c_{ki}|^2 +\sum_{j=1}^{p}\sum_{k=1}^{\infty}|d_{kj}|^2 \leq \norm f_{\dcl_\alpha}^2 \label{ineq2}.
	\end{align}
\end{lma}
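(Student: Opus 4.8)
The plan is to obtain this lemma as a direct specialization of the Factorization Theorem (Theorem \ref{thm1}) applied to the pair $(\dcl_\alpha, B)$, where $\dcl_\alpha$ is equipped with the equivalent norm $\norm{\cdot}_2$, the multiplier is $u = B$, and the lower bound is $\gamma = \gamma_2 = 1$. Thus essentially all the analytic work is already carried out inside Theorem \ref{thm1}, and the remaining task is to verify that its hypotheses hold in the present setting, after which the conclusion is read off by substituting these particular choices.

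First I would record, via Theorem \ref{th1}, that every $f \in \dcl_\alpha$ expands as $f = \sum_{n=0}^{\infty} g_n B^n$ with $g_n \in \mathcal{K}_B$, so $\norm{\cdot}_2$ is a genuine (equivalent) Hilbert-space norm. Multiplication by $B$ shifts this expansion by one index, giving $\norm{T_B f}_2^2 = \sum_{n=0}^{\infty}(n+2)^\alpha \norm{g_n}_{\hdcc}^2$; since $\alpha \geq 0$ forces $(n+2)^\alpha \geq (n+1)^\alpha$ for every $n$, this yields $\norm{T_B f}_2 \geq \norm{f}_2$, and the fact that $(n+2)^\alpha/(n+1)^\alpha \to 1$ pins the lower bound in \ref{lbd} to $\gamma_2 = 1$. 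Hence $T = \gamma_2^{-1} T_B = T_B$ satisfies the contraction-type hypothesis $\norm{f}_2 \leq \norm{T_B f}_2$ required by Lemma \ref{lm} and Theorem \ref{thm1}. I would then confirm conditions (i)--(iv): $\dcl_\alpha$ is a reproducing-kernel Hilbert space of functions analytic on $\wcl = \D$, covering (i) and (iii); $B$ is a multiplier of $\dcl_\alpha$, giving (ii); and the lower bound just computed gives (iv) with constant $c = 1$.

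The one structurally meaningful point is the hypothesis \eqref{hypo1}. With $u = B$ and $\gamma = 1$ the preimage $u^{-1}(D(0,\gamma)) = B^{-1}(\D)$ is all of $\D$, because a finite Blaschke product satisfies $|B(z)| < 1$ precisely on $\D$; so \eqref{hypo1} collapses to $\bigcap_{n \in \N} B^n \dcl_\alpha = \{0\}$, which holds since a nonzero $f$ has a well-defined order of vanishing relative to $B$ in its $\mathcal{K}_B$-expansion (membership in $B^n \dcl_\alpha$ forces the first $n$ coefficients to vanish). Having verified the hypotheses, I would apply Theorem \ref{thm1}(i), which is the correct branch because $\mcl \nsubseteq T_B \dcl_\alpha$, and the index set may be taken finite, $I = \{1,\dots,r\}$, since $r = \dim(\mcl \ominus (\mcl \cap T_B \dcl_\alpha))$ is finite by the argument of Lemma \ref{a}.

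Finally, the simplification $\gamma_2 = 1$ makes the specialization transparent: here $u/\gamma = B$ and $\gamma^{-1} M_u = T_B$, so the general factorization $h = \sum_i g_i q_i + \gamma^{-1} M_u \sum_j e_j h_j$ of Theorem \ref{thm1} becomes exactly $f = \sum_{i=1}^r f_i q_i + T_B \sum_{j=1}^p e_j h_j$, the series $q_i = \sum_k c_{ki}(u/\gamma)^k$ and $h_j = \sum_k b_{kj}(u/\gamma)^{k-1}$ collapse to the stated $q_i = \sum_k c_{ki} B^k$ and $h_j = \sum_k d_{kj} B^{k-1}$ (after relabelling $b_{kj}$ as $d_{kj}$), and the coefficient bound transfers verbatim to \eqref{ineq2}. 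I do not expect a serious obstacle: the approximation scheme and the summability estimates are already packaged in Lemma \ref{lm} and Theorem \ref{thm1}, so the only care required is in confirming $\gamma_2 = 1$ and the triviality of $\bigcap_n B^n \dcl_\alpha$, both of which are clean for finite Blaschke products and, unlike the $\alpha \in [-1,0)$ case, need no passage to a smaller disc since $B^{-1}(\D) = \D$ already.
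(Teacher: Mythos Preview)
Your proposal is correct and matches the paper's approach exactly: the paper obtains Lemma \ref{lma2} by verifying that the pair $(\dcl_\alpha, B)$ with the norm $\norm{\cdot}_2$ satisfies conditions (i)--(iv) with lower bound $\gamma_2 = 1$, that $B^{-1}(D(0,1)) = \D$, and that $\bigcap_{m \in \N} B^m \dcl_\alpha = \{0\}$, and then reads off the conclusion from Theorem \ref{thm1}(i) with $I = \{1,\ldots,r\}$. Your verification of these hypotheses, including the argument for $\gamma_2 = 1$ via the shift in the $\mathcal{K}_B$-expansion and the triviality of $\bigcap_n B^n \dcl_\alpha$, is precisely what the paper sketches in the paragraph preceding the lemma.
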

\begin{rmrk}\label{rem2}
	If $\mcl \subseteq T_B\dcl_\alpha$, then using the same notation as in Lemma \ref{lma2} for any $f\in \mcl $ there exists $\{h_j\}_{j=1}^p$ in $\mathcal{O}(\D)$ such that 
	\begin{equation*}
	f=T_B\sum_{j=1}^{p}e_jh_j
	\end{equation*}
	and also there exists $(b_{kj})_{k\in \N }\in \C^\N$  with 
	\begin{align}
	h_j &=\sum_{k=1}^{\infty}b_{kj}B^{k-1} \quad \text{and} \quad 
	\sum_{j=1}^{p}\sum_{k=1}^{\infty}|b_{kj}|^2 \leq \norm f_{\dcl_\alpha}^2 .
	\end{align}
\end{rmrk}
Now we are in a position to describe the nearly $T_B ^{-1} $ invariant subspace with defect $p$ in $\dcl_\alpha$ for $\alpha \in [0,1]$, providing a generalization of Theorem 3.4 in \cite{YP}. Due to Lemma \ref{a1} without loss of generality we assume $B(0)=0$.
\begin{thm}\label{thm2}
	Let $\mcl$ be a nearly $T_B^{-1}$ invariant subspace of $\dcl_\alpha$ with finite defect $p$  for $\alpha \in [0,1]$ and let $\fcl$ be the $p$ dimensional defect space. Let $E_0:=[e_1,e_2,\ldots ,e_p]$ where $\{e_j\}_{j=1}^p$ is an orthonormal basis of $\fcl$ using norm $\norm ._2$. Then 
	
	(i) in the case when $\mcl \nsubseteq T_B\dcl_\alpha$, if $F_0:=[f_1,f_2,\ldots ,f_r]$ is a matrix containing an orthonormal basis $\{f_i\}_{i=1}^r$ of $\mcl \ominus(\mcl \cap T_B\dcl_\alpha)$, then there exists a linear subspace $\ncl \subset H^2_{\C^{r+p}}(\D)$ such that 
	$$ \mcl= \Bigg\{f\in \dcl_\alpha :f=F_0 q +T_BE_0h : (q,h)\in \ncl \Bigg\}$$  together with 
	$$\norm {q}^2_{H^2(\D,\C^r)}+\norm {h}^2_{H^2(\D,\C^p)}\leq \norm {f}^2_{\dcl_\alpha}.$$ Moreover, $\ncl$ is $T_{\overline{B}}$ invariant.
	
	(ii) In the case $\mcl \subset T_B\dcl_\alpha$, there exists a linear subspace $\ncl \subset H^2_{\C^{p}}(\D)$ such that 
	$$ \mcl= \Bigg\{f\in \dcl_\alpha :f= T_BE_0h : h\in \ncl \Bigg\}$$  together with $$\norm {h}^2_{H^2(\D,\C^p)}\leq \norm {f}^2_{\dcl_\alpha}, $$ and $\ncl$ is $T_{\overline{B}}$ invariant.
\end{thm}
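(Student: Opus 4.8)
The plan is to follow the blueprint of the preceding theorem (the case $\alpha\in[-1,0)$), but now on the full disc, exploiting the simplifications afforded by $\gamma_2=1$, $s=1$ and $B^{-1}(\D)=\D$. For $f\in\mcl$ with $\mcl\nsubseteq T_B\dcl_\alpha$ I would start from the pointwise factorization of Lemma \ref{lma2}, writing $f=F_0q+T_BE_0h$ with $q=[q_1,\dots,q_r]^t$, $h=[h_1,\dots,h_p]^t$ and $q_i=\sum_{k\ge0}c_{ki}B^k$, $h_j=\sum_{k\ge1}d_{kj}B^{k-1}$. Setting $A_k=[c_{k1},\dots,c_{kr}]^t$ and $B_k=[d_{k1},\dots,d_{kp}]^t$, this reads $q=\sum_{k\ge0}A_kB^k$ and $h=\sum_{k\ge1}B_kB^{k-1}$, and I would then define $\ncl$ to be the set of all pairs $(q,h)\in\hdcr\times\hdcp$ arising in this way from some $f\in\mcl$.

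The one genuinely new step is the norm estimate, and this is where the argument departs from the $\alpha<0$ case. There the bound came from $\norm{\gamma_1^{-1}B}_{H^\infty(s\D)}<1$ together with a geometric series; here $B$ is inner, so $\norm{B}_{H^\infty(\D)}=1$ and that device is unavailable. Instead I would use the reduction $B(0)=0$ (legitimate by Lemma \ref{a1}), which makes $\{B^k\}_{k\ge0}$ an orthogonal family in $\hdcc$ with $\norm{B^k}_{\hdcc}=1$: indeed $\langle B^j,B^k\rangle=\langle B^{j-k},1\rangle=B^{j-k}(0)=0$ for $j>k$. Consequently $\norm{q_i}_{\hdcc}^2=\sum_{k\ge0}|c_{ki}|^2$ and $\norm{h_j}_{\hdcc}^2=\sum_{k\ge1}|d_{kj}|^2$, so the convergence $q\in\hdcr$, $h\in\hdcp$ follows from \ref{ineq2}, and summing over $i,j$ gives directly
\[
\norm{q}_{\hdcr}^2+\norm{h}_{\hdcp}^2=\sum_{i=1}^r\sum_{k\ge0}|c_{ki}|^2+\sum_{j=1}^p\sum_{k\ge1}|d_{kj}|^2\le\norm{f}_{\dcl_\alpha}^2,
\]
which is the claimed inequality (cleaner than before, with constant $1$).

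It remains to prove $T_{\overline{B}}$-invariance of $\ncl$, for which I would apply the Approximation Lemma \ref{lm} with $T=\gamma_2^{-1}T_B=T_B$. The case $m=0$ of \eqref{eqlmma1} gives $f=Qf+T_BRf$ with $Qf=F_0A_0$, while Lemma \ref{lm} also gives $Rf\in\mcl\oplus\fcl$. Equating with $f=F_0q+T_BE_0h$ and writing $q-A_0=B\,T_{\overline{B}}q$, the injectivity of $T_B$ yields $Rf=F_0\,T_{\overline{B}}q+E_0h$. Projecting onto $\fcl$ identifies $SRf=E_0B_1$ (the $k=1$ coefficient), and since $h-B_1=B\,T_{\overline{B}}h$ one obtains $P_\mcl Rf=F_0\,T_{\overline{B}}q+T_BE_0\,T_{\overline{B}}h\in\mcl$. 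By the definition of $\ncl$ this says exactly $(T_{\overline{B}}q,T_{\overline{B}}h)\in\ncl$, i.e. $\ncl$ is $T_{\overline{B}}$-invariant. Here $T_{\overline{B}}=T_B^*$ acts on each power as the backward shift $T_{\overline{B}}B^k=B^{k-1}$ (and $T_{\overline{B}}1=0$), again because $B(0)=0$; this is the structural fact that lets one read off the coefficient shift.

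For case (ii), where $\mcl\subset T_B\dcl_\alpha$, one has $r=0$ and I would repeat the same steps starting from Remark \ref{rem2} instead of Lemma \ref{lma2}, obtaining $\ncl\subset\hdcp$ with $f=T_BE_0h$, $\norm{h}_{\hdcp}^2\le\norm{f}_{\dcl_\alpha}^2$, and $T_{\overline{B}}$-invariance by the identical projection argument (the $F_0$-terms simply drop out). The main obstacle throughout is the norm bound in the second paragraph: recognizing that the $H^\infty$-contraction estimate of the $\alpha<0$ case collapses when $B$ is inner, and replacing it by the orthogonality of the Blaschke powers under the normalization $B(0)=0$.
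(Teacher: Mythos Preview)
Your proposal is correct and follows essentially the same route as the paper's own proof: apply Lemma~\ref{lma2} to obtain the factorization $f=F_0q+T_BE_0h$, use the orthonormality of $\{B^k\}_{k\ge0}$ in $\hdcc$ (available since $B(0)=0$) to convert the coefficient bound \eqref{ineq2} into the $H^2$-norm inequality, define $\ncl$ as the set of resulting pairs, and then run the $m=0$ case of the Approximation Lemma together with injectivity of $T_B$ and the decomposition $Rf=P_\mcl Rf+SRf$ to deduce $(T_{\overline{B}}q,T_{\overline{B}}h)\in\ncl$. Your explicit justification of $\norm{q_i}_{\hdcc}^2=\sum_k|c_{ki}|^2$ via $\langle B^j,B^k\rangle=B^{j-k}(0)=0$ is exactly the point the paper leaves implicit when it simply writes that equality, and your identification of this as the replacement for the $H^\infty(s\D)$ estimate of the $\alpha<0$ case is the correct diagnosis.
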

\begin{proof}
(i) For $f\in \mcl\subset \dcl_\alpha$ with $\alpha \in [0,1]$, then by applying Lemma \ref{lma2} we get
\begin{align}\label{fnthm1}
f &=\sum_{i=1}^{r}f_iq_i +T_B\sum_{j=1}^{p}e_jh_j =F_0 q +T_BE_0h,  
\end{align}
where $q=[q_1,q_2,\ldots ,q_r]^t$ and $h=[h_1,h_2,\ldots ,h_p]^t$.
Next by using the facts \ref{eq2} and \ref{ineq2} we obtain the following norm equalities and norm estimates for any $i\in \{1,2,\ldots ,r\}$ and $j\in\{1,2,\ldots ,p\}$:	
\begin{align*}
\norm {q_i}_{\hdcc}^2=\sum_{k=0}^{\infty}|c_{ki}|^2, \quad
\norm {h_j}_{\hdcc}^2=\sum_{k=1}^{\infty}|d_{ki}|^2,
\end{align*}
and hence
\begin{align*}
\norm{q} ^2_{H^2_{\C^r}(\D)} + \norm{h} ^2_{H^2_{\C^p}(\D)} =\sum_{i=1}^{r}\norm{q_i}^2_{H^2_{\C}(\D)} + \sum_{j=1}^{p}\norm{h_j}^2_{H^2_{\C}(\D)} =\sum_{i=1}^{r}\sum_{k=0}^{\infty}|c_{ki}|^2
+ \sum_{j=1}^{p}\sum_{k=1}^{\infty}|d_{kj}|^2\leq \norm{f}_{\dcl_\alpha}^2.
\end{align*}

Thus it follows that 
\begin{align*}
q&=\sum_{k=0}^{\infty}C_kB^k \in H^2_{\C^r}(\D), \quad \text{where}~ C_k=[c_{k1},c_{k2},\ldots ,c_{kr}]^t,
\end{align*} 
and
\begin{align*}
h&=\sum_{k=1}^{\infty}D_kB^{k-1} \in H^2_{\C^p}(\D) \quad \text{where}~ D_k=[d_{k1},d_{k2},\ldots ,d_{kp}]^t.
\end{align*} 
Now we define a linear subspace as follows $$\ncl :=\Bigg\{(q,h)\in  H^2_{\C^{r}}(\D)\times H^2_{\C^{p}}(\D) :\exists f\in \mcl \quad \text{such that}\quad f=F_0 q +T_BE_0h  \Bigg\},$$
satisfying for any $f\in \mcl $, $\exists (q,h)\in \ncl$ such that
\begin{align*}
f =F_0 q +T_BE_0h \quad \text{with}\quad 
\norm{f}_{\dcl_\alpha}^2 &\geq  \norm{q} ^2_{H^2_{\C^r}(\D)} + \norm{h} ^2_{H^2_{\C^p}(\D)}
\end{align*}
 Next we show that $\ncl$ is invariant under $T_{\overline{B}}$. Consider $T=T_B$ and $\hilh =\dcl_\alpha$ for $\alpha\in[0,1]$ in Lemma \ref{lm} and therefore the corresponding operator $R$, $Q$ and $S$ in Lemma \ref{lm} becomes $R=(T_B^*T_B)^{-1}T_B^*P_{\mcl \cap T_B\dcl_\alpha}, Q=P_{\mcl \ominus (\mcl \cap \dcl_\alpha)}, S=P_\fcl$ and hence the equation  \eqref{eqlmma1} with $m=0$ implies
 for any $f\in \mcl$, 
\begin{align*}
f =Qf+TRf
=Qf+T_BRf,
\end{align*}  
which together with \eqref{fnthm1} yields
\begin{align*}
F_0 q +T_BE_0h &=Q(F_0 q +T_BE_0h)+T_BR(F_0 q +T_BE_0h)\\
&=F_0C_0 +BR(F_0 q +T_BE_0h),
\end{align*}
which further satisfies
\begin{align*}
F_0 (q-C_0)+T_BE_0h &=BR(F_0 q +T_BE_0h).
\end{align*}
Since $T_B$ is injective, then from the above we conclude
\begin{align}\label{finthm2}
R(F_0 q +T_BE_0h) &= F_0 (\sum_{k=1}^{\infty}C_kB^{k-1}) +E_0h
=F_0(T_{\overline{B}}q)+E_0h.
\end{align}
On the other hand note that $R(F_0 q +T_BE_0h)\in \mcl \oplus \fcl$ and hence
\begin{align}\label{finthm3}
R(F_0 q +T_BE_0h) &= P_\mcl R(F_0 q +T_BE_0h) +E_0 D_1
\end{align} 
Thus by combining \eqref{finthm2} and \eqref{finthm3} we get 

\begin{align*}
P_\mcl R(F_0 q +T_BE_0h) &=F_0(T_{\overline{B}}q) + E_0(\sum_{k=2}^{\infty}D_kB^{k-1})
=F_0(T_{\overline{B}}q)+T_BE_0(T_{\overline{B}}h).
\end{align*}
Since $P_\mcl R(F_0 q +T_BE_0h) \in \mcl$, then from the definition of $\ncl$ it follows that $(T_{\overline{B}}q,T_{\overline{B}}h)\in \ncl$. Thus $\ncl$ is $T_{\overline{B}}$ invariant in $H^2_{\C^{r+p}}(\D)$.

(ii) If $\mcl \subset T_B\dcl_\alpha$, then by using Remark \ref{rem2} and proceeding similarly as in case (i) we  obtain a linear subspace $\ncl \subset H^2_{\C^{p}}(\D)$ such that 
$$ \mcl= \Bigg\{f\in \dcl_\alpha :f= T_BE_0h : h\in \ncl \Bigg\} \quad  \text{together with} \quad  \norm {h}_{H^2_{\C^p}(s\D)}  \leq \norm {f}_{\dcl_\alpha},$$ and $\ncl$ is $T_{\overline{B}}$ invariant in $H^2_{\C^{p}}(\D)$. This completes the proof.
\end{proof}
Next we consider a special case of \ref{diag1}
\begin{align}\label{diag3}
\begin{matrix}
H^2_{\C^{r+p}}(\D)\xrightarrow[\hspace*{3cm}]{T}H^2_{\C^{r+p}}(\D)\\
U\Bigg\downarrow\hspace*{3.5cm}\Bigg\downarrow U\\
H^2_{\C^{m(r+p)}}(\D)\xrightarrow[\hspace*{3cm}]{S}H^2_{\C^m({r+p})}(\D)
\end{matrix}
\end{align}
Then $SU=UT_B$ holds for the unilateral shift $S:H^2_{\C^{m(r+p)}}(\D)\to H^2_{\C^{m(r+p)}}(\D)$ and $T_B:H^2_{\C^{r+p}}(\D) \to H^2_{\C^{r+p}}(\D)$ having multiplicity $m(r+p)$. Using this fact we  have the following remark concerning finite dimensional nearly $T_B^{-1}$ invariant subspaces of $\dcl_\alpha$ for $\alpha \in [0,1]$.
\begin{rmrk}
	Note that the subspace $\ncl$ is not closed in general. In the above Theorem \ref{thm2} if we consider $\mcl $ is finite dimensional, then $\ncl \subset H^2_{\C^{r+p}}(\D)$ is also finite dimensional and hence closed. Then from Beurling-Lax-Halmos Theorem and using diagram \ref{diag3} we  obtain that there exists a non negative integer $l$ with $l\leq m(r+p)$ and an inner multiplier $\Phi \in H^\infty_{\mathcal{L}(\C^l,\C^{m(r+p)})}(\D)$ such that
	\begin{align*}
	\ncl&= U^*\bigg(H^2_{\C^{m(r+p)}}(\D) \ominus \Phi H^2_{\C^l}(\D)\bigg) \quad \text{and hence} ~\\
	\mcl &=\Bigg\{f\in \dcl_\alpha :f=F_0 q +T_BE_0h : (q,h)\in U^*\bigg(H^2_{\C^{m(r+p)}}(\D) \ominus \Phi H^2_{\C^l}(\D)\bigg)\Bigg\}.
	\end{align*}
\end{rmrk}

\end{document}